\documentclass[12pt, oneside]{amsart}

\usepackage[T1]{fontenc}
\usepackage[utf8]{inputenc}
\usepackage{csquotes}

\usepackage{amssymb}
\usepackage{amsthm}
\usepackage{amsmath}
\usepackage[only,Yup]{stmaryrd} 
\usepackage{mathrsfs}    
\usepackage{comment}
\usepackage{soul}
\usepackage{cancel}
\usepackage{graphicx}

\usepackage[a4paper,left=2cm,top=3cm,right=2cm,bottom=2cm]{geometry}

\usepackage[all,cmtip]{xy}
\usepackage{quiver}
\usepackage{lipsum}

\usepackage{enumerate}

\usepackage{indentfirst}

\usepackage{hyperref}
\hypersetup{colorlinks=true,allcolors=black}  


\usepackage{pstricks}
\usepackage{graphicx}

\usepackage{multicol}

\hyphenation{Birk-häu-ser Ma-the-ma-ti-sche Zeit-schrift}

\pagestyle{plain}

\DeclareMathOperator{\differential}{d}

\DeclareMathOperator{\codim}{codim}

\DeclareMathOperator{\tub}{Tub}

\DeclareMathOperator{\im}{im}

\DeclareMathOperator{\reg}{reg}

\DeclareMathOperator{\Ho}{H}

\newcommand{\blup}[2]{\mathrm{Bl}_{#1}{#2}}

\mathchardef\ordinarycolon\mathcode`\:
\mathcode`\:=\string"8000
\begingroup \catcode`\:=\active
  \gdef:{\mathrel{\mathop\ordinarycolon}}
\endgroup

\begin{document}

    \title{Blown-up singular Riemannian foliations}

\author{Francisco C.~Caramello Jr.}
\address{Departamento de Matemática, Universidade Federal de Santa Catarina, R. Eng. Agr. Andrei Cristian Ferreira, 88040-900, Florianópolis - SC, Brazil}
\email{francisco.caramello@ufsc.br}
\author{Laura Ribeiro dos Santos}
\address{Instituto de Matemática e Estatística, Universidade de São Paulo, Rua do Matão, 1010, 05508-090, São Paulo - SP, Brazil}
\address{Departamento de Matemática, Universidade Federal de Santa Catarina, R. Eng. Agr. Andrei Cristian Ferreira, 88040-900, Florianópolis - SC, Brazil}
\email{laura.ribeiro.santos@usp.br}

\newenvironment{proofoutline}{\proof[Sketch]}{\endproof}
\theoremstyle{definition}
\newtheorem{example}{Example}[section]
\newtheorem{definition}[example]{Definition}
\newtheorem{obs}[example]{Remark}
\newtheorem{exercise}[example]{Exercise}
\theoremstyle{plain}
\newtheorem{prop}[example]{Proposition}
\newtheorem{thm}[example]{Theorem}
\newtheorem{lemma}[example]{Lemma}
\newtheorem{cor}[example]{Corollary}
\newtheorem{claim}[example]{Claim}
\newtheorem{thmx}{Theorem}
\renewcommand{\thethmx}{\Alph{thmx}} 
\newtheorem{corx}[thmx]{Corollary} 

\newcommand{\dif}[0]{\differential\hspace{-1pt}}
\newcommand{\od}[2]{\frac{\dif #1}{\dif #2}}
\newcommand{\pd}[2]{\frac{\partial #1}{\partial #2}}
\newcommand{\dcov}[2]{\frac{\nabla #1}{\dif #2}}
\newcommand{\proin}[2]{\left\langle #1, #2 \right\rangle}
\newcommand{\f}[0]{\mathcal{F}}
\newcommand{\g}[0]{\mathcal{G}}

\newcommand{\metric}{\ensuremath{\mathrm{g}}}
\newcommand{\qcd}{\begin{flushright} $\Box$ \end{flushright}}
\newcommand{\rar}[0]{\rightarrow}
\newcommand{\fol}[0]{\mathcal{F}}
\newcommand{\vct}[0]{\mathfrak{X}}
\newcommand{\trs}[0]{\mathfrak{l}}
\newcommand{\bsc}[0]{\Omega_{\text{bas}}^0(M,\fol)}
\newcommand{\RR}[0]{\mathbb{R}}
\newcommand{\folb}[0]{\mathcal{F}_{\text{bas}}}
\newcommand{\Sg}[0]{\Sigma}

\begin{abstract}
We investigate new properties and applications of the blow-up desingularization method in the context of singular Riemannian foliations. First, we relate the dynamics of such a foliation, which is governed by the so-called Molino sheaf, with that of its blow-up. In the particular case of singular Killing foliations, this leads to a strong constraint: when the Euler characteristic of the ambient manifold is non-vanishing and the singular strata are all odd-codimensional, the leaves of such foliations are all closed. Next, we show that the space of leaf closures of a singular Killing foliation is the Gromov--Hausdorff limit of a sequence of orbifolds, whose dimensions are the codimension of the foliation. Finally, we relate the basic cohomology of a singular Riemannian foliation with that of its blow-up, generalizing well-known, classical analogous results in algebraic and complex geometry.
\end{abstract}

\maketitle
\setcounter{tocdepth}{1}
\tableofcontents

\section{Introduction}
The blow-up construction is a central geometric technique when it comes to resolving singularities. Historically, blow-ups were first introduced in algebraic geometry: given a (complex) subvariety \( Z \subset X \), the blow-up \(\operatorname{Bl}_Z X\) replaces \(Z\) by the projectivization of its normal bundle, producing a new variety in which the singular locus is \enquote{spread out} along an exceptional divisor. This refinement reached a turning point with Hironaka’s celebrated theorem on the resolution of singularities for complex analytic and algebraic varieties (see, for instance, \cite{hironaka}, \cite[Pg.~621]{poag}). In the smooth (real) setting, the blow-up is analogous: given a smooth manifold \(M\) and a closed embedded submanifold \(N \subset M\), the real blow-up replaces \(N\) by the bundle of its normal directions, often realized as the spherical or projective normal bundle.

More recently, this technique has been applied in the context of singular Riemannian foliations in \cite{desmolino} and \cite{desmarcos}, following similar results coming from the theory of Lie group actions \cite[Section 2.9]{duistermaat}. These foliations have locally equidistant leaves, a constraint that leads to a robust structural theory, nowadays known as Molino theory, in honor to its main contributor P.~Molino \cite{molino}. Molino's conjecture states that the leaf closures of such a foliation $\fol$ form another singular Riemannian foliation $\overline{\fol}$, however, was only recently established to hold true, in \cite{molinoconjec}. Part of the structural theory is devoted to the description of $\overline{\fol}$ by a locally constant sheaf $\mathscr{C}_{\fol}$ of Lie algebras of transverse vector fields (see Section \ref{section background} for more details). When $\mathscr{C}_{\fol}$ is smooth and globally constant, ${\mathcal{F}}$ is called a singular \emph{Killing} foliation, following the terminology established in \cite{mozgawa} (and in \cite{equicaramello} for the singular case).

The first application of the blow-up method in this context was by Molino himself, desingularizing a regular Riemannian foliation $\fol$ along the locus $\Sigma$ consisting of the minimal (deepest, most singular) strata of $\overline{\f}$ (see Section \ref{section background} for details), and obtaining a blown-up foliation $\blup{\Sg}{\fol}$ on $\blup{\Sigma}{M}$ with simpler dynamics. The particular geometry of Riemannian foliations ensures that the lift of $\fol|_{M\setminus\Sigma}$, which is a copy of it, indeed extends well to the exceptional divisor $E=\pi^{-1}(\Sigma)$. In \cite{desmarcos} this construction was generalized to an already singular Riemannian foliation, resulting in a regular foliation $\blup{}{\fol}$ after a finite sequence of blow-ups. Both works focused on the construction of a metric on $\blup{\Sg}{M}$ that is adapted to $\blup{\Sg}{\fol}$, hence turning it into a singular Riemannian foliation. Figure \ref{blup circle} illustrates the blow-up in the simple, yet quintessential case of the foliation of $\mathbb{R}^2$ by the orbits of $\mathrm{SO}(2)$, for which $\blup{}{M}$ is a Möbius band.

\begin{figure}
\centering{
\resizebox{0.3\textwidth}{!}{
\begingroup%
  \makeatletter%
  \providecommand\color[2][]{%
    \errmessage{(Inkscape) Color is used for the text in Inkscape, but the package 'color.sty' is not loaded}%
    \renewcommand\color[2][]{}%
  }%
  \providecommand\transparent[1]{%
    \errmessage{(Inkscape) Transparency is used (non-zero) for the text in Inkscape, but the package 'transparent.sty' is not loaded}%
    \renewcommand\transparent[1]{}%
  }%
  \providecommand\rotatebox[2]{#2}%
  \newcommand*\fsize{\dimexpr\f@size pt\relax}%
  \newcommand*\lineheight[1]{\fontsize{\fsize}{#1\fsize}\selectfont}%
  \ifx\svgwidth\undefined%
    \setlength{\unitlength}{134.55458874bp}%
    \ifx\svgscale\undefined%
      \relax%
    \else%
      \setlength{\unitlength}{\unitlength * \real{\svgscale}}%
    \fi%
  \else%
    \setlength{\unitlength}{\svgwidth}%
  \fi%
  \global\let\svgwidth\undefined%
  \global\let\svgscale\undefined%
  \makeatother%
  \begin{picture}(1,2.05230669)%
    \lineheight{1}%
    \setlength\tabcolsep{0pt}%
    \put(0.77452152,0.99858004){\color[rgb]{0,0,0}\makebox(0,0)[lt]{\lineheight{1.25}\smash{\begin{tabular}[t]{l}$\blup{}{M}$\end{tabular}}}}%
    \put(0,0){\includegraphics[width=\unitlength,page=1]{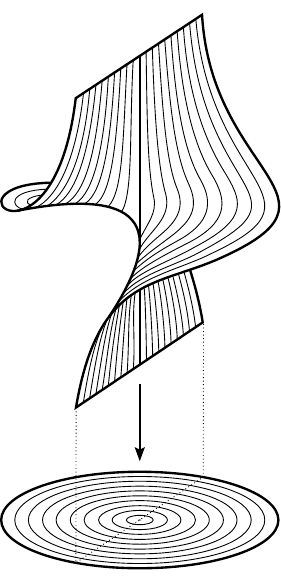}}%
    \put(0.52135159,0.55339694){\color[rgb]{0,0,0}\makebox(0,0)[lt]{\lineheight{1.25}\smash{\begin{tabular}[t]{l}$\pi$\end{tabular}}}}%
    \put(0.3806965,1.97573021){\color[rgb]{0,0,0}\makebox(0,0)[lt]{\lineheight{1.25}\smash{\begin{tabular}[t]{l}$E$\end{tabular}}}}%
    \put(0.02412133,0.41345242){\color[rgb]{0,0,0}\makebox(0,0)[lt]{\lineheight{1.25}\smash{\begin{tabular}[t]{l}$\Sg$\end{tabular}}}}%
    \put(0.8512834,0.00871021){\color[rgb]{0,0,0}\makebox(0,0)[lt]{\lineheight{1.25}\smash{\begin{tabular}[t]{l}$M$\end{tabular}}}}%
    \put(0,0){\includegraphics[width=\unitlength,page=2]{blupS1.pdf}}%
  \end{picture}%
\endgroup%
}}
\caption{Blow-up of the foliation of $\RR^2$ by orbits of $\mathrm{SO}(2)$.}
\label{blup circle}
\end{figure}

In the present paper, we find new applications of the blow-up method and establish further properties of it, aiming to generalize results from the regular setting. For instance, we provide a natural alternative construction of the foliation $\blup{}{\fol}$ by blowing up the module of $\fol$-tangent vector fields, yielding a construction equivalent to the one obtained via equifocality arguments in \cite{desmarcos}. We then generalize results of \cite{desmolino} to arbitrary singular Riemannian foliations, showing that the dynamics of $\blup{}{\fol}$ are described by the blow-up of the Molino (central) sheaf $\mathscr{C}_{\fol}$ of $\fol$. More precisely, we show that the blow-up operation commutes with taking the Molino sheaf:
\begin{thmx}[Theorem \ref{blup sheaf}]\label{thmA}
Let $\fol$ be a singular Riemannian foliation of a compact manifold $M$. Then the blow-down map $\pi$ induces an isomorphism
\[
\mathscr{C}_{\blup{}{\fol}} \cong \blup{}{\mathscr{C}_{\fol}}.
\]
\end{thmx}

It follows from Theorem \ref{thmA} that, if $\fol$ is a singular Killing foliation, then $\blup{}{\fol}$ is a (regular) Killing foliation. This in turn leads us to a strong dynamics constraint for singular Killing foliations of a compact manifold $M$ with $\chi(M) \neq 0$:

\begin{thmx}[Theorem \ref{closed leaves kill}]\label{thmB}
Let $\fol$ be a singular Killing foliation on a compact manifold $M$, and let $\Sigma_1,\dots,\Sigma_\ell$ be the even-codimensional minimal strata appearing in the blow-up process. If 
\[
\chi(M)\neq  \sum_{j=1}^\ell \chi(\Sigma_{j}),
\]
then $\fol$ is closed (i.e., its leaves are all closed). In particular, if $\chi(M)\neq 0$ and all the singular strata of $\fol$ are odd-codimensional, then $\fol$ is closed.
\end{thmx}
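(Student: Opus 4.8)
The plan is to reduce, via the desingularization of \cite{desmarcos}, to the case of a \emph{regular} Killing foliation, where one can exhibit a nowhere-vanishing vector field and apply the Poincar\'e--Hopf theorem. First recall that, since $\overline{\fol}$ is generated by $\fol$ together with the foliate lifts of the structural fields of $\mathscr{C}_\fol$, a Killing foliation is closed if and only if its structural algebra $\mathfrak{a}$ — the stalk of the constant sheaf $\mathscr{C}_\fol=\underline{\mathfrak{a}}$ — vanishes. So suppose, aiming at a contradiction, that $\fol$ is \emph{not} closed, i.e.\ $\mathfrak{a}\neq 0$. By \cite{desmarcos}, a finite sequence of blow-ups along the minimal loci appearing in the desingularization yields a regular Riemannian foliation $\blup{}{\fol}$ on the compact manifold $\blup{}{M}$; by Theorem \ref{blup sheaf}, $\mathscr{C}_{\blup{}{\fol}}\cong\blup{}{\mathscr{C}_\fol}$, and since the blow-up of the constant sheaf $\underline{\mathfrak{a}}$ is again $\underline{\mathfrak{a}}$, the foliation $\blup{}{\fol}$ is a regular Killing foliation with the same, nonzero, structural algebra $\mathfrak{a}$.

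The heart of the argument is that a nonzero structural field of a \emph{regular} Killing foliation is nowhere vanishing. Choose $0\neq X\in\mathfrak{a}$ and view it as a section of the normal bundle $\nu(\blup{}{\fol})$; recalling that structural fields are transversally parallel, the locus where $X$ vanishes is both open and closed in the connected manifold $\blup{}{M}$, and it is not all of $\blup{}{M}$ because $X\neq 0$, hence it is empty: $X$ is everywhere nontangent to $\blup{}{\fol}$. In particular the canonical foliate lift $\overline{X}\in\vct(\blup{}{M})$ is a nowhere-zero vector field, so $\chi(\blup{}{M})=0$ by Poincar\'e--Hopf. It is precisely here that regularity of $\blup{}{\fol}$ is used: for the original singular $\fol$ the structural fields vanish along the singular strata, and the blow-up is what resolves these zeros onto the exceptional divisors.

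It remains to compute $\chi(\blup{}{M})$ in terms of $\chi(M)$. Blowing up a compact manifold $M'$ along a closed submanifold $N$ of codimension $c$ replaces $N$ by the projectivized normal bundle $\mathbb{P}(\nu N)$, an $\mathbb{RP}^{c-1}$-bundle over $N$, so by additivity of the Euler characteristic over $\blup{N}{M'}=(M'\setminus N)\sqcup\mathbb{P}(\nu N)$ and its multiplicativity in fibre bundles,
\[
\chi(\blup{N}{M'})=\chi(M')-\chi(N)+\chi(N)\,\chi(\mathbb{RP}^{c-1}),
\]
which is $\chi(M')$ when $c$ is odd and $\chi(M')-\chi(N)$ when $c$ is even, since $\chi(\mathbb{RP}^{c-1})$ equals $1$ or $0$ accordingly. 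Iterating over the desingularization, only the even-codimensional centers $\Sigma^1,\dots,\Sigma^\ell$ contribute, so
\[
0=\chi(\blup{}{M})=\chi(M)-\sum_{j=1}^{\ell}\chi(\Sigma^j).
\]
This contradicts the hypothesis $\chi(M)>\sum_{j=1}^\ell\chi(\Sigma^j)$, so $\fol$ is closed. For the final assertion, if every singular stratum of $\fol$ is odd-codimensional then, by the structure of the desingularization in \cite{desmarcos}, no even-codimensional center occurs, i.e.\ $\ell=0$; the displayed identity then reads $\chi(M)=0$, contradicting $\chi(M)\neq 0$, so again $\fol$ is closed.

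The main obstacle is the middle step: establishing that, once the foliation has been made regular, its structural algebra is realized by \emph{nowhere-zero} foliate vector fields on $\blup{}{M}$ — this is the one place where the Killing hypothesis and the passage to a regular model are genuinely exploited. The remaining ingredients — the Euler-characteristic formula for a single projective blow-up and its iteration over the desingularization of \cite{desmarcos}, matched against the loci $\Sigma^1,\dots,\Sigma^\ell$ recorded in the statement — are routine.
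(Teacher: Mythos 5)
Your overall strategy is the same as the paper's: desingularize via \cite{desmarcos}, use Theorem \ref{blup sheaf} (through Corollary \ref{structural algebra}) to see that $\blup{}{\fol}$ is a regular Killing foliation with the same structural algebra as $\fol$, compute $\chi(\blup{}{M})=\chi(M)-\sum_{j}\chi(\Sigma^j)$ by iterating the single-blow-up Euler characteristic formula, and descend closedness from $\blup{}{\fol}$ to $\fol$ via triviality of the structural algebra. Those parts are correct and match the paper. The one place you diverge is the key step ``a regular Killing foliation of a compact manifold with $\chi\neq 0$ is closed'': the paper simply cites \cite[Theorem~9.1]{poscara} for this, whereas you try to prove it inline, and that inline argument has a genuine gap.

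The gap is your claim that a nonzero element $X$ of the structural algebra, viewed as a section of $\nu(\blup{}{\fol})$, has open (hence open-and-closed) zero locus because ``structural fields are transversally parallel.'' Sections of the Molino sheaf are transverse \emph{Killing} fields, not transversally parallel fields; a Killing field is determined by its $1$-jet at a point and its zero set is a closed, totally geodesic, typically \emph{proper} submanifold --- not an open set. Indeed, the transverse zero set of a structural field is exactly the locus where the leaf closure drops dimension, which is nonempty whenever $\overline{\fol}$ has singular strata. A concrete counterexample to your claim: the flow on $S^3$ generated by $a\,\partial_{\theta_1}+b\,\partial_{\theta_2}$ with $a/b$ irrational is a regular one-dimensional Killing foliation that is not closed, with structural algebra $\mathbb{R}$, and its generating structural field vanishes transversally precisely on the two closed core circles. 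So you cannot conclude that the lift of $X$ is nowhere zero, and the Poincar\'e--Hopf step collapses; repairing it essentially requires the actual content of \cite[Theorem~9.1]{poscara}, which does not go through a nowhere-vanishing structural field. A smaller point: your final assertion that odd-codimensional singular strata of $\fol$ force $\ell=0$ needs an argument that odd codimension of the centers propagates through the iterated blow-ups (the later centers live in $\blup{k}{M}$, not $M$); the paper is equally terse here, so I only flag it.
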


This implies a similar constraint to Riemannian foliations with odd-codimensional singular strata on manifolds with finite fundamental group (see Corollary \ref{corollary: singular riemannian closed}). Theorem \ref{thmA} also leads to a result on the leaf (closures) space under blow-ups, in the spirit of \cite[Corollary~1.6]{desmarcos}. In that result, the leaf space of a closed singular Riemannian foliation is shown to be the Gromov--Hausdorff limit of a sequence of orbifolds. For a general $\f$, in view of the validity of Molino's conjecture, one can apply it to $\overline{\fol}$. In the case of a Killing foliation, we provide a ``decolapse'' of the sequence of orbifolds converging to $M/\overline{\fol}$, by combining \cite[Corollary~1.6]{desmarcos} with the deformation method from \cite{chicos}.

\begin{thmx}[Theorem \ref{chicos def}]\label{thmD}
Let $\fol$ be a singular Killing foliation of codimension $q$ of a connected, compact manifold $M$. Then there exists a sequence of closed $q$-codimensional Riemannian foliations $\mathcal{G}_i$, on blow-up spaces $(\blup{}{M})_i$ of $M$, such that 
\[
(\blup{}{M})_i/\mathcal{G}_i \xrightarrow{\ \textnormal{GH}\ } M/\overline{\fol}.
\]
In particular, $M/\overline{\fol}$ is the Gromov--Hausdorff limit of a sequence of $q$-dimensional orbifolds.
\end{thmx}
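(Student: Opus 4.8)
The plan is to combine the iterated blow-up desingularization of \cite{desmarcos} with the deformation theory for Killing foliations of \cite{chicos}, using Theorem \ref{thmA} to track the Molino sheaf — and hence the leaf-closure foliation — along the blow-ups.

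\emph{Desingularization.} First I would run the desingularization of \cite{desmarcos}, resolving $\fol$ by a finite sequence of blow-ups and then resolving its leaf-closure foliation, so as to obtain a connected compact manifold $\blup{}{M}$, a blow-up map $\pi\colon\blup{}{M}\to M$, and regular Riemannian foliations $\blup{}{\fol}$ and $\overline{\blup{}{\fol}}$ on $\blup{}{M}$, with $\codim\blup{}{\fol}=\codim\fol=q$ (each blow-up preserves the codimension of the foliation). By the corollary to Theorem \ref{thmA}, $\blup{}{\fol}$ is a regular \emph{Killing} foliation; and since the Molino sheaf governs leaf closures and the blow-up is a local construction compatible with the foliation generated by a family of transverse fields, Theorem \ref{thmA} also gives $\overline{\blup{}{\fol}}\cong\blup{}{\overline{\fol}}$, so that the above procedure simultaneously desingularizes the closed singular Riemannian foliation $\overline{\fol}$.

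\emph{The two limits.} By \cite[Corollary~1.6]{desmarcos} applied to $\overline{\fol}$, there is a family $\{g_t\}_{t>0}$ of metrics on $\blup{}{M}$, adapted to the blow-up structure and making $\overline{\blup{}{\fol}}$ Riemannian, that collapses the exceptional directions, so that
\[
(\blup{}{M},g_t)\big/\overline{\blup{}{\fol}}\ \xrightarrow{\ \textnormal{GH},\ t\to0\ }\ M/\overline{\fol}.
\]
On the other hand, applying \cite[Theorem~4.1]{chicos} to the regular Killing foliation $\blup{}{\fol}$ on the compact connected manifold $\blup{}{M}$ yields a sequence of closed $q$-codimensional Riemannian foliations $\mathcal{G}_i$ on $\blup{}{M}$, each $C^\infty$-close to $\blup{}{\fol}$, with
\[
\blup{}{M}\big/\mathcal{G}_i\ \xrightarrow{\ \textnormal{GH}\ }\ \blup{}{M}\big/\overline{\blup{}{\fol}}.
\]
Since each $\mathcal{G}_i$ is a closed regular Riemannian foliation of codimension $q$ on a compact manifold, every leaf space $\blup{}{M}/\mathcal{G}_i$ is a compact Riemannian orbifold of dimension $q$.

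\emph{Diagonal argument.} Finally I would merge the two convergences by a diagonal argument. For each $i$, using that $\mathcal{G}_i$ is $C^\infty$-close to $\blup{}{\fol}$, I would perturb a metric $g_{t(i)}$ from the collapsing family (choosing $t(i)\to0$ slowly) to a metric that is simultaneously adapted to $\mathcal{G}_i$ and still adapted to the blow-up structure, the smallness of the deformation controlling the resulting change of the quotient metric and of the Gromov--Hausdorff distance; taking $(\blup{}{M})_i:=\blup{}{M}$ and the $\mathcal{G}_i$ above, this produces $(\blup{}{M})_i/\mathcal{G}_i\xrightarrow{\ \textnormal{GH}\ }M/\overline{\fol}$ with each term a compact $q$-orbifold, which is the assertion. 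I expect the main obstacle to be exactly this coordination step: exhibiting a single sequence of metrics on $\blup{}{M}$ that is adapted to the deformed foliations $\mathcal{G}_i$ while still realizing the collapse of the exceptional divisor onto $M/\overline{\fol}$. This rests on the stability of the relevant classes of adapted metrics under small deformations of the foliation, together with the continuity of the leaf-space distance under such deformations, which is where the bulk of the technical work will lie.
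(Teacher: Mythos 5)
Your overall strategy is the same as the paper's: desingularize via \cite{desmarcos}, use Corollary \ref{blup kil is kil} (the corollary of Theorem \ref{thmA}) to see that the resulting regular foliation is Killing, feed it into \cite[Theorem 4.1]{chicos}, and conclude by a diagonal/triangle-inequality argument. The one substantive divergence is in how you organize the final step, and there you have manufactured an obstacle that the paper's proof shows is not actually there. You fix a single blow-up space $\blup{}{M}$, produce a collapsing family of metrics $g_t$ realizing $(\blup{}{M},g_t)/\overline{\blup{}{\fol}}\to M/\overline{\fol}$, separately produce closed approximations $\mathcal{G}_i$ of $\blup{}{\fol}$, and then try to perturb $g_{t(i)}$ so that it is simultaneously adapted to $\mathcal{G}_i$ and still collapsed --- a coordination you yourself flag as the hard part, and which would indeed require a stability/uniformity statement not available off the shelf. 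The paper instead orders the quantifiers so that no such coordination is needed: for each $k$ one \emph{first} fixes a blow-up $(\blup{}{M})_i$ together with its already-collapsed adapted metric $\blup{}{\metric}_i$ from Theorem \ref{g_h alex}, so that $(\blup{}{M})_i/\overline{(\blup{}{\fol})_i}$ is within $1/2k$ of $M/\overline{\fol}$, and \emph{then} applies \cite[Theorem 4.1]{chicos} to the fixed Riemannian foliated manifold $((\blup{}{M})_i,(\blup{}{\fol})_i,\blup{}{\metric}_i)$ to approximate its leaf-closure space by quotients $(\blup{}{M})_i/\mathcal{G}_{j_i}$ to within another $1/2k$; the deformation theorem handles all metric issues internally because it is invoked after the metric has been fixed. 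Two smaller imprecisions in your write-up: the closure $\overline{\blup{}{\fol}}$ of the desingularized foliation is in general still a \emph{singular} Riemannian foliation, not a regular one as you assert (this does not affect the Gromov--Hausdorff statement, but the orbifolds in the conclusion are the quotients by the closed regular foliations $\mathcal{G}_i$, not by $\overline{\blup{}{\fol}}$); and the identification $\overline{\blup{}{\fol}}\cong\blup{}{\overline{\fol}}$, while a reasonable reading of Theorem \ref{thmA}, is not needed if one phrases the first approximation directly in terms of $(\blup{}{M})_i/\overline{(\blup{}{\fol})_i}$ as the paper does.
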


Finally, we also investigate the behavior of \emph{basic cohomology} under blow-ups, establishing a relationship between the basic cohomology groups of the desingularized (regular) foliation $\blup{}{\fol}$ and those of $\fol$.

\begin{thmx}[Theorem \ref{principal result}]\label{principal result intro}
Let $(M,\fol)$ be a complete singular Riemannian foliation with minimal locus $\Sg$, and let $\pi\colon \blup{\Sg}{M}\to M$ be the blow-down map. If each $\pi^*\colon \Ho^i(\fol) \to \Ho^i(\blup{\Sg}{\fol})$ is injective, then
\begin{equation}\label{equation isom cohom bas intro}
\Ho^i(\blup{\Sg}{\fol})
\cong 
\pi^*(\Ho^i(\fol))\oplus \dfrac{\Ho^i(\blup{\Sg}{\fol}|_E)}{\pi^*(\Ho^i(\fol|_{\Sg}))},
\end{equation}
for every $i$, as vector spaces.
\end{thmx}

This provides a generalization of the classical situation, which corresponds to the particular case where the foliation is trivial, i.e., its leaves are the points of $M$ (see Corollary \ref{de rham cohomology} and the paragraph preceding it). The proof of Theorem \ref{principal result intro} essentially follows the aforementioned classical case \cite{poag,gitler}, with the difference that the later doesn't require the hypothesis on the injectivity of $\pi^*$, since it is inferred from the existence of the Gysin map $\pi_*$ \cite[Section~2]{gitler}. It is not straightforward to determine when this hypothesis is satisfied; for instance, Poincaré duality (which is usually invoked in the construction of $\pi_*$) in general fails for singular Riemannian foliations  (see, for instance, \cite[Example 2.4]{wolak}). We detect a very particular case in which in fact $\Ho^i(\blup{\Sg}{\fol})\cong \Ho^i(\fol)$: namely, when $(\blup{\Sigma}{\fol})|_E=\pi^*(\fol|_\Sigma)$ (see Corollary \ref{cor pullback foliation}).

\section{Background} \label{section background}

In this section we briefly review the foundational theory for singular Riemannian foliations. We begin by recalling the notions of basic cohomology, the homothetic transformation lemma and the stratification associated with the foliation. We then introduce Killing foliations and provide an overview of Molino’s theory for both regular and singular cases.

\subsection{Singular Riemannian Foliations} Regular Riemannian foliations are relatively well-understood geometric structures that received extensive attention in the 1980s. The contributions of P. Molino, in particular, provided a deep understanding of the dynamical behavior of the leaves of a regular Riemannian foliation, culminating in what is now known as Molino theory (see \cite[Chapter 5]{molino} or \cite[Chapter 4]{mrcun} for thorough introductions). In the past two decades, the study of singular Riemannian foliations (a generalization of Molino’s framework to the singular setting) has developed substantially, and recent research has yielded new insights into fundamental questions, as well as significant progress on long-standing conjectures and extensions of classical results. In this section we present the basic and fundamental theory of singular Riemannian foliations, presenting the results of regular Riemannian foliations as a particular case.

\begin{definition}[Singular Riemannian Foliations]\label{srf definition} Given $(M,\metric)$ a Riemannian manifold and $\fol$ a partition of $M$ into immersed, connected submanifolds, called \emph{leaves}, we say that $\fol$ is a \emph{singular Riemannian foliation} if
\begin{enumerate}
    \item it is a \emph{singular foliation}: for every $v \in T_x\fol$, there exists $X \in \mathfrak{X}(M)$ always tangent to the leaves and such that $X(x)=v$ (or, in other words, the module $\mathfrak{X}(\fol)$ of vector fields tangent to $\f$ is \textit{transitive});
    \item it is a \emph{transnormal system}, in the terminology of Bolton \cite{bolton}: every geodesic that is perpendicular to a leaf in some point remains perpendicular to all the leaves it meets (or, more succinctly, $\metric$ is \textit{adapted} to $\fol$).
\end{enumerate}
Given $x\in M$, we will denote the leaf containing it by $L_x$.
\end{definition}

We define the \emph{dimension of $\fol$} as 
\[\dim(\fol) = \max_{L\in \fol} \dim(L),\]
and its \emph{codimension} by $\codim(\fol)=\dim(M)-\dim(\fol)$. Let $r$ be the number of dimensions occurring among the leaves of $\fol$. The \emph{depth} of $\fol$ is $\operatorname{depth}(\fol):=r-1$. We say that $\fol$ is a \emph{regular} Riemannian foliation when $\dim(L)$ is constant for every $L \in \fol$, that is, when $\operatorname{depth}(\fol)=0$. In this case one also says that an adapted metric $\metric$ is \emph{bundle-like} for $\fol$. We say that $\fol$ is a \emph{complete} singular Riemannian foliation when there exists a complete metric on $M$ that is adapted to $\f$.

\begin{example}[Homogeneous Singular Riemannian Foliations]\label{homogeneous foliations} Let $H$ be a Lie group acting isometrically on a Riemannian manifold $(M,\metric)$. The induced infinitesimal action $\mu$ of the Lie algebra $\mathfrak{h}$ of the group $H$ generates a distribution $TH$ on $M$, which is precisely the tangent distribution to the orbits of the action: $T_p H(p) = \mu(\mathfrak{h})|_{p}$. This shows that the partition $\fol_{H}$ of $M$ into the connected components of the orbits of $H$ is a singular foliation. Moreover, it is a singular Riemannian foliation, because the action being isometric guarantees that $\metric$ is adapted to $\fol_{H}$ (see, for instance, \cite[Proposition 3.78 and Remark 3.79]{alex}). 
\end{example}

Suppose $(N,\mathcal{G})$ is a singular foliation. A smooth map $f \colon (M,\fol) \rar (N,\mathcal{G})$ is called \emph{foliate} if it maps the leaves of $\fol$ into leaves of $\mathcal{G}$. We denote by $C^\infty(\fol)=\Omega^0(\fol)$ the ring of foliate maps $f\colon (M,\fol)\to \mathbb{R}$ (in which $\mathbb{R}$ is endowed with the trivial foliation by points), whose elements are called \emph{basic functions}. A smooth map $f \colon M \times [0,1] \rar N$ is a \emph{foliate homotopy} when, for every $t \in [0,1]$, the partial maps 
\[\begin{aligned}
    f_t \colon M & \longrightarrow M\\
    x & \longmapsto f(x,t)
\end{aligned}\]
are foliate. If $V \subset M$ is a saturated submanifold (that is, $V$ can be expressed as a union of leaves), then a \emph{foliate deformation retraction $f\colon M \times [0,1] \rar M$} onto $V$ is a foliate homotopy which is also a deformation retraction of $M$ onto $V$. 
A vector field $X \in \mathfrak{X}(M)$ is called \emph{foliate} when $[X,Y] \in \mathfrak{X}(\fol)$ for every $Y \in \mathfrak{X}(\fol)$. These are precisely the vector fields whose local flows are foliate maps. We denote the Lie algebra of foliate fields by $\mathfrak{L}(\fol)$. The \emph{transverse vector fields} are the elements in the quotient Lie algebra $\mathfrak{l}(\fol):=\mathfrak{L}(\fol)/\mathfrak{X}(\fol)$. These hence combine into the exact sequence of $\Omega^0(\fol)$-modules
\[
    \begin{tikzcd}
        0 \rar & \mathfrak{X}(\fol) \rar & \mathfrak{L}(\fol) \rar & \mathfrak{l}(\fol) \rar & 0.
    \end{tikzcd}
\]

A differential form $\omega \in \Omega^k(M)$ is called \emph{basic} if, for every $X \in \mathfrak{X}(\fol)$, it satisfies $\iota_{X}\omega=0$ and $\mathcal{L}_{X}\omega=0$. The space of basic $k$-forms is denoted by $\Omega^k(\fol)$. We denote by
\[\Omega(\fol) := \bigoplus_k \Omega^k(\fol)\]
the \emph{algebra of basic forms of $\fol$}, which is a $\mathbb{Z}$-graded differential algebra (defining $\Omega^k(\fol)=\{0\}$ for $k<0$), since the exterior differential $d$ preserves basic forms (as can be readily seen from Cartan's formula). Therefore we can compute the cohomology groups of the subcomplex of basic forms, which we call the \emph{basic cohomology groups of $\fol$}, denoted by $\Ho^k(\fol)$. We also denote $\Ho(\fol) = \bigoplus \Ho^k(\fol)$, which is a graded algebra with the usual exterior product. Notice that, when $\fol$ is the trivial foliation by points, $\Ho(\fol)$ reduces to the de Rham cohomology of $M$ (in general, one intuitively thinks of $\Ho(\fol)$ as the de Rham cohomology of $M / \fol$). If $f\colon (M, \fol) \to (N, \mathcal{G})$ is foliated, then the pullback of a $\mathcal{G}$-basic form is a $\fol$-basic form, and thus there is an induced linear map 
\[ 
f^*\colon \Ho(\mathcal{G}) \longrightarrow \Ho(\fol).
\]
For $\fol$-saturated open sets $U, V \subset M$, the short exact sequence
\[
0 \longrightarrow \Omega(\fol|_{U \cup V}) 
\xrightarrow{(i_U^*,\, i_V^*)} 
\Omega(\fol|_U) \oplus \Omega(\fol|_V) 
\xrightarrow{i_U^* - i_V^*} 
\Omega(\fol|_{U \cap V}) 
\longrightarrow 0
\]
induces a Mayer–Vietoris sequence in basic cohomology (see \cite[Section 2.1]{equicaramello}).





\subsection{Homothetic Lemma and Stratification}

Let $(M,\fol,\metric)$ be a complete singular Riemannian foliation and let $P\subset L\in \fol$ be a connected, open subset whose inclusion in $M$ is an embedding. An open neighborhood $U\supset P$ is a \textit{distinguished tubular neighborhood} when it is the diffeomorphic image of $\{v\in\nu P\ {\vert}\ \|v\|<r\}$, for some $r>0$, under the normal exponential map $\exp^\perp:\nu L\to M$, in which case we denote $U=\mathrm{Tub}_r(P)$. We denote by $\rho_P:\mathrm{Tub}_r(P)\to P$ the orthogonal projection. A distinguished tubular neighborhood exists, for instance, when $P$ is relatively compact in $L$.

For all $\lambda\in(0,\infty)$ such that $\mathrm{Tub}_{\lambda r}(P)$ is a distinguished tubular neighborhood of $P$, one can consider the \textit{homothetic transformation}
$$h_\lambda:\mathrm{Tub}_{r}(P)\ni\exp^\perp(v)\longmapsto \exp^\perp(\lambda v)\in \mathrm{Tub}_{\lambda r}(P)$$
around $P$. Each $h_\lambda$ is a diffeomorphism, and $h_\lambda\circ h_\mu=h_{\lambda\mu}$, when both sides of the equation make sense. We can extend the definition to $\lambda=0$, getting $h_0=\rho_P$. Our interest in these maps stem from the following fundamental property of singular Riemannian foliations:

\begin{lemma}[Molino's homothetic lemma {\cite[Lemma 6.2]{molino}}] \label{homothetic lemma} Let $\mathrm{Tub}_r(P)$ be a distinguished tubular neighborhood. Then each $h_\lambda$ is foliate with respect to the restrictions of $\fol$ to $\mathrm{Tub}_{r}(P)$ and $\mathrm{Tub}_{\lambda r}(P)$.
\end{lemma}

We say that the subset $\Sigma^r$ consisting of all leaves of $\fol$ of dimension $r$ is the \emph{locus} of $r$-dimensional leaves. A fundamental consequence of Lemma \ref{homothetic lemma} is that every connected component of it is an embedded submanifold of $M$ \cite[Proposition 6.3]{molino}. Running over all loci, these components are the \emph{strata} of a stratification
\[
M = \bigsqcup_{\alpha} \Sigma_\alpha,
\]
of $M$. The indexing will be used consistently throughout the paper: superscript for loci, subscript for strata. By construction, for each $\alpha$, the restriction $\fol_\alpha := \fol|_{\Sigma_\alpha}$ forms a regular foliation. The locus of leaves of maximal dimension, $\Sigma_{\dim\fol}$ is the \emph{regular locus} of $\fol$, that we will denote by $\Sigma_{\reg}$. Its complement is the \emph{singular locus} $\Sigma_{\mathrm{sing}}$.

\begin{prop}[{\cite[Section 6.2]{molino}}]\label{proposition: properties strata} With the notation above, the following holds for each $\alpha$:
\begin{enumerate}
    \item $\Sigma_\alpha$ is a totally geodesic submanifold of $(M,\metric)$. 
    \item $\metric|_{\Sigma_{\alpha}}$ turns $\fol_{\alpha}$ into a regular Riemannian foliation.
    \item If $L \subset \Sigma_{\alpha}$, then $\overline{L} \subset \Sigma_{\alpha}$.
    \item All the singular strata have codimension at least 2, so $\Sigma_{\reg}$ is an open, dense submanifold of $M$.
    \item Lemma \ref{homothetic lemma} extends, \textit{mutatis mutandis}, to the situation in which $L$ is replaced by a saturated closed submanifold $S \subset M$ within a stratum.
\end{enumerate}
\end{prop}

We refer to the singular locus of leaves with minimal dimension as the \emph{minimal locus}. It is usually denoted by $\Sigma_{\min}$, but we will favor the cleaner notation $\Sigma$ whenever this doesn't lead to confusion, since this object will be recurrently used throughout the paper. We notice that it is a compact submanifold of $M$, due to the lower semi-continuity of $x\to \dim L_x$.

\subsection{Molino Theory for Riemannian foliations} Consider a regularly foliated manifold $(M,\fol)$. A bundle-like metric $\metric$ for $\fol$ naturally induces a \emph{transverse metric}, that is, a symmetric, basic $(2,0)$-tensor field $\metric^\intercal$ on $M$ satisfying $\metric^\intercal(v,v)>0$ for every $v$ not tangent to $\fol$ \cite[Proposition 3.5]{molino}. The converse is also true: any transverse metric can be completed to a bundle-like metric \cite[Proposition 3.3]{molino}. Notice that, therefore, the Lie derivative of $\metric^\intercal$ (or, more generally, of any basic tensor field) in the direction of $\overline{X}\in\mathfrak{l}(\fol)$ is well-defined by $\mathcal{L}_{\overline{X}}\metric^\intercal:=\mathcal{L}_{X}\metric^\intercal$.

\begin{definition}[Transverse Killing Field] We say $\overline{X} \in \mathfrak{l}(\fol)$ is a \emph{transverse Killing vector field} when $\mathcal{L}_{\overline{X}}\metric^\intercal=0$.
\end{definition}

Molino's second structural theorem for complete (regular) Riemannian foliations \cite[Theorem 5.2]{molino} describes the behavior of the leaf closures by exhibiting them as orbits of a locally constant sheaf $\mathscr{C}_{\fol}$ of germs of local transverse Killing fields, called the \emph{Molino sheaf} of $\fol$. We refer to \cite{alex4} for a more detailed survey on this topic, including a quick review of sheaf theory. Unwrapping the definitions, this means that, for each $x \in M$, there exists a neighborhood $U$ of $x$ such that
\[T_y \overline{L}_y = T_yL_y \oplus\{X(y) \, | \, X \in \mathscr{C}_{\fol}(U) \},\]
for each $y \in U$. For $M$ connected, the typical stalk of $\mathscr{C}_{\fol}$ is isomorphic to the opposite of the so-called \textit{structural algebra} $\mathfrak{g}_\fol$ of $\fol$: the isomorphism class of $\mathfrak{l}(\fol|_{\overline{L}})$ \cite[Theorem 4.3]{molino}.

\begin{definition}[Killing Foliation] A complete (regular) Riemannian foliation is a \emph{Killing foliation} when its Molino sheaf is globally constant.
\end{definition}

This automatically holds, for instance, when $M$ is simply connected. Another important class of examples is the following:

\begin{example}[{\cite[Lemma III]{desmolino}}] \label{homogenous fol is killing}If $\fol$ is the regular Riemannian foliation of a compact manifold $M$ given by the orbits of an isometric action of a Lie group $H$ on $M$, then $\fol$ is a Killing foliation, because the Molino sheaf $\mathscr{C}_{\fol}$ consists of the transverse Killing vector fields induced by the action of $\overline{H}$.   
\end{example}

Let us now cover the singular setting, so assume $\fol$ is a complete \emph{singular} Riemannian foliation. Using its stratification, one can generalize the concept of transverse Killing vector field:

\begin{definition} We say that $X \in \mathfrak{l}(\fol)$ is a \emph{transverse Killing vector field} if its restriction to each stratum $\Sigma_{\alpha}$ is a transverse Killing vector field of the regular Riemannian foliation $\fol_{\alpha}$.
\end{definition}

Extending the Molino sheaf to the singular case is possible, but not straightforward. First, even though a restricted foliation $\fol_{\alpha}$ may fail to be complete, Molino’s theory can still be applied to it. This is possible thanks to the pseudogroup formulation of Molino's theory due to Haefliger and Salem \cite[Appendix D]{molino}, since the holonomy pseudogroup 
associated with $\fol_{\alpha}$ is complete. In particular, for the regular part $\fol_{\reg}$, one can show that the corresponding Molino sheaf $\mathscr{C}_{\reg}$ admits a continuous extension to a locally constant sheaf $\mathscr{C}_{\fol}$ on $M$, referred to as the \emph{Molino sheaf of} $\fol$ \cite[Lemma 6.5]{molino}. Although every section of $\mathscr{C}_{\reg}$ extends continuously through the singular stratum, it is not known whether such an extension is necessarily smooth. Equivalently, a section $\overline{X}$ of $\mathscr{C}_{\fol}$ restricts to a genuine local transverse Killing vector field on each stratum, but is represented on $M$ by a merely continuous local vector field $X$.

Molino proved that the closure of a complete regular Riemannian foliation is a singular Riemannian foliation \cite{molino}, and he conjectured that the closure of a complete singular Riemannian foliation is a singular Riemannian foliation. This conjecture was proved to hold true recently in \cite{molinoconjec} by Alexandrino and Radeschi. Nevertheless, the \emph{strong Molino conjecture} is still open, and states that the extension $\mathscr{C}_{\fol}$ described above is always smooth. It is true, for instance, in the particular case of the so-called \emph{orbit-like} foliations, as shown in \cite[Theorem 3.4]{equicaramello}.

\begin{definition}[Singular Killing Foliation] A complete singular Riemannian foliation $\fol$ is a \textit{singular Killing foliation} when the sheaf $\mathscr{C}_{\fol}$ is a globally constant sheaf of Lie algebras of germs of (smooth) transverse Killing vector fields.    
\end{definition}

Notice that we are, hence, including the smoothness of $\mathscr{C}_{\fol}$ as part of the definition of Killing foliations. Again, singular Riemannian foliations of simply connected manifolds are automatically Killing, whenever their sheaves are smooth (e.g., when they are orbit-like). Homogeneous foliations provide another class of examples.

\begin{example} The exact same setting as in Example \ref{homogenous fol is killing} applies without the regularity assumption. Moreover, $\overline{\fol}$ is also homogeneous, given by the connected components of the orbits of $\overline{H}$.    
\end{example}

\begin{obs}\label{remark: sheaf of strata}
For each stratum $\Sigma_{\alpha}$, the corresponding Molino sheaf $\mathcal{C}_{\alpha}$ associated to $\fol_{\alpha}$ is obtained as the quotient of the restriction of $\mathscr{C}_{\fol}$ to $\Sigma_{\alpha}$ by the kernel of the restriction map on sections. Equivalently, this kernel is the subsheaf consisting of sections whose restrictions to $\Sigma_{\alpha}$ vanish \cite[Proposition 6.8]{molino}.
\end{obs}

Consequently, the orbits of $\mathscr{C}_{\fol}$ describe precisely the closures of the leaves of $\fol$, as in the regular case. The typical stalk $\mathfrak{g}_{\fol}$ of $\mathscr{C}_{\fol}$  is isomorphic to the opposite of the structural algebra of $\fol_{\mathrm{reg}}$, which by definition is the structural algebra $\mathfrak{g}_\fol$ of $\fol$. Also from Remark \ref{remark: sheaf of strata}, we see that the structural algebra $\mathfrak{g}_{\alpha}$ of each $\fol_{\alpha}$ is a quotient of $\mathfrak{g}_{\fol}$ \cite[Proposition 6.8]{molino}.
\section{Blow-up of Singular Riemannian Foliations}\label{alexandrino blow}

We start with a brief recollection of the blow-up construction. For a compact, connected submanifold $S\subset (M,\metric)$, the (projective) blow-up $\blup{S}{M}$ of $M$ along $S$ is usually constructed by first taking a tubular neighborhood $U$ of $S$, considering
\[\blup{S}{U}=\{(x,[\xi])\in U \times \mathbb{P}(\nu S) \mid x=\exp^{\perp}(t\xi),\ t<\varepsilon\},\]
which comes with the natural projection
\[\begin{aligned}
    \pi \colon \blup{S}{U} & \longrightarrow U,\\
    (x,[\xi]) & \longmapsto x,
\end{aligned}\]
and then gluing $\blup{S}{U}$ along the boundary, so that $\blup{S}{M}:=\blup{S}{U}\#(M\setminus U)$. The projection $\pi$ extends naturally to the \textit{blow-down map}  $\pi\colon \blup{S}{M}\to M$. The preimage $E:=\pi^{-1}(S)=\mathbb{P}(\nu(S))$ is the \emph{exceptional divisor} of the blow-up, and $\pi \colon E \rar S$ is the canonical projection of the fiber bundle $\mathbb{P}(\nu(S)) \rar S$. This construction can be generalized to a non-compact $S$, provided it admits a suitable suitable neighborhood $U$. We now present the following lifting property for Lie group actions, which will be very useful later.

\begin{thm}[{\cite[Theorem 5.1]{arone}}]\label{proposition lifting actions}
Let $G\begin{tikzcd}[ampersand replacement=\&, cramped, sep=small] \& \arrow[loop left] \phantom{X} \end{tikzcd} \hspace{-8pt} M$ be a smooth Lie group action and $S\subset M$ be a $G$-invariant, closed submanifold. Then the $G$-action lifts to a smooth action $G\begin{tikzcd}[ampersand replacement=\&, cramped, sep=small] \& \arrow[loop left] \phantom{X} \end{tikzcd} \hspace{-8pt} \blup{S}{M}$, with respect to which the blow-down projection $\pi\colon \blup{S}{M}\to M$ becomes $G$-equivariant.
\end{thm}

Notice that this result allows one to also lift complete vector fields that are tangent to \(\Sg\), since such fields induce an $\mathbb{R}$-action on $M$. The proof in \cite{arone}, however, permits broader applications: it boils down to the local properties of the blow-up construction (\cite[Theorem 4.1]{arone}) and on a general fact for blow-ups of cartesian products (\cite[Proposition 4.3]{arone}). This strategy hence also works when the vector fields are only locally defined, and not necessarily complete. That is, we can lift the local flows of general local vector fields, provided they are tangent to $\Sg$ (see also \cite[Section~2]{arone}).

Now let us pass to the applications of this technique for Riemannian foliations. In \cite{desmolino}, successive blow-ups are used to desingularize $\overline{\fol}$, starting from a regular Riemannian foliation $\fol$. In that setting, $\blup{\Sg}{\fol}$ is constructed by explicitly inducing a foliated atlas for it, from the one for $\fol$. In contrast, the approach in \cite{desmarcos} for a singular Riemannian foliation $\fol$ is based on equifocality and adapts the construction in \cite[Section 2.9]{duistermaat} for blow-ups of proper Lie group actions. In what follows, we present a new version of the later technique: we will lift $T\fol$ to an integrable singular distribution. In detail, let $(M,\fol,\metric)$ be a complete singular Riemannian foliation and let $\Sg$ be its minimal locus. In order to simplify the desingularization process, we will blow up all connected components of $\Sg$ simultaneously, even when their dimensions differ. Note, however, that the process may as well be subdivided into intermediate blow-ups, component by component, if needed. The first ingredient we need is a suitable neighborhood $U\supset \Sg$. When $\Sg$ is compact, one can just take $U$ to be an $\varepsilon$-tubular neighborhood of $\Sg$, narrow enough so that the neighborhoods $\tub_{\varepsilon}(\Sg_{\alpha})$ of its components are distinguished and pairwise disjoint. The particular geometry of $\fol$, however, allows one to blow $\Sg$ up even when it is not compact, ultimately, thanks to the fact that closed leaves of Riemannian foliations always admit tubular neighborhoods \cite[Proposition 16]{radeschislice}. To do that, one needs to replace the tubular neighborhood with a saturated neighborhood $U$ (not necessarily tubular) with similar properties --- for instance, take
\[U=\bigcup_{\overline{L}\subset\Sigma}\operatorname{Tub}_{\varepsilon(\overline{L})}\left(\overline{L}\right).\]
Alternatively, one can apply \cite[Proposition 2.18]{desmarcos} to $\overline{\f}$, noticing that $\Sg$ will also be the minimal locus of $\overline{\f}$ (as it follows from Remark \ref{remark: sheaf of strata}). Then one proceeds with the general blow-up construction, as described in the beginning of this section.

If $X\in \mathfrak{X}({\fol})$, then of course it is tangent to $\Sigma$, and hence lifts to a smooth vector field $\blup{\Sg}{X}\in\mathfrak{X}(\blup{\Sigma}{M})$, as explained above. We define the family of all such lifted vector fields on the blow-up as 
\[ \blup{\Sg}{\mathfrak{X}(\fol)} = \left\{ \blup{\Sg}{X} \mid X \in \mathfrak{X}(\fol) \right\}. \]
This collection naturally spans a smooth singular distribution $\blup{\Sg}{T\fol}$ on $\blup{\Sg}{M}$, defined by evaluating the vector fields at each point $p \in \blup{\Sg}{M}$.

\begin{prop} \label{sing foliation construction}
    The singular distribution $\blup{\Sg}{T\fol}$ is integrable to a singular foliation $\blup{\Sg}{\fol}$ of $\blup{\Sg}{M}$, such that $\pi\colon\blup{\Sg}{M}\to M$ is a foliated map and $\pi|_{(\blup{\Sg}{M})\setminus E}$ is a diffeomorphism onto $M\setminus\Sigma$. Moreover, $\operatorname{depth}(\blup{\Sg}{\fol})=\operatorname{depth}(\fol)-1$.
\end{prop}

\begin{proof}
    By the Stefan--Sussmann theorem (see, e.g., \cite[Theorem 4]{lavau}), it is sufficient to verify that $\blup{\Sg}{T\fol}$ is invariant under the flows of the vector fields in our generating family $\blup{\Sg}{\mathfrak{X}(\fol)}$. Let $X \in \mathfrak{X}(\fol)$ and let $\phi_t$ be its local flow on $M$. Since $\phi_t$ preserves $\fol$, the pushforward of any vector field $Y \in \mathfrak{X}(\fol)$ along $\phi_t$ remains tangent to $\fol$; that is, $(\phi_t)_*Y \in \mathfrak{X}(\fol)$. Now, let $\blup{\Sg}{X} \in \blup{\Sg}{\mathfrak{X}(\fol)}$ be the lifted vector field, and let $\blup{\Sg}{\phi_t}$ denote its flow. By the equivariant properties of the blow-up, $\blup{\Sg}{\phi_t}$ is indeed the lift of $\phi_t$, meaning that $\pi \circ \blup{\Sg}{\phi_t} = \phi_t \circ \pi$.
    
    For any $Y \in \mathfrak{X}(\fol)$, we can consider the pushforward of its lift, $(\blup{\Sg}{\phi_t})_*(\blup{\Sg}{Y})$. Outside the exceptional divisor $E$, where the blow-down map $\pi$ is a local diffeomorphism, the $\pi$-relatedness of the vector fields and their flows yields
\[ \pi_* \left( (\blup{\Sg}{\phi_t})_*(\blup{\Sg}{Y}) \right) = (\phi_t)_* \left( \pi_* \blup{\Sg}{Y} \right) = (\phi_t)_* Y. \]
This implies that, on $\blup{\Sg}{M} \setminus E$, the vector field $(\blup{\Sg}{\phi_t})_*(\blup{\Sg}{Y})$ coincides with the lift of $(\phi_t)_* Y$. Since $(\phi_t)_* Y \in \mathfrak{X}(\fol)$, its lift belongs to $\blup{\Sg}{T\fol}$. By continuity, the equality extends to the entire manifold $\blup{\Sg}{M}$, so we obtain
\[ (\blup{\Sg}{\phi_t})_*(\blup{\Sg}{Y}) = \blup{\Sg}{((\phi_t)_* Y)} \in \blup{\Sg}{\mathfrak{X}(\fol)}.\]
Therefore, the family $\blup{\Sg}{\mathfrak{X}(\fol)}$ is invariant under its own flows, which directly implies that the distribution $\blup{\Sg}{T\fol}$ it spans is also flow-invariant. It is clear by construction that $\operatorname{depth}(\blup{\Sg}{\fol})=\operatorname{depth}(\fol)-1$.

Finally, we verify that the blow-down projection $\pi\colon \blup{\Sg}{M} \to M$ is a foliated map. At any point $x \in \blup{\Sg}{M}$, the tangent space to the leaf of $\blup{\Sg}{\fol}$ through $x$ is $\blup{\Sg}{T\fol}_x = \{ \blup{\Sg}{X}_x \mid X \in \mathfrak{X}(\fol) \}$. Because the lifted vector field $\blup{\Sg}{X}$ is $\pi$-related to its base vector field $X$, the differential $\dif \pi_x$ satisfies
\[ \dif \pi_x(\blup{\Sg}{X}_x) = X_{\pi(x)}. \]
Since $X$ is tangent to $\fol$, it follows that $X_{\pi(x)}$ belongs to $T_{\pi(x)}\fol$. This yields $\dif \pi_x(\blup{\Sg}{T\fol}_x) \subset T_{\pi(x)}\fol$, for all $x \in \blup{\Sg}{M}$. Consequently, $\pi$ maps the leaves of the blow-up foliation $\blup{\Sg}{\fol}$ into the leaves of the original foliation $\fol$.
\end{proof}

The singular foliation $\blup{\Sg}{\fol}$ integrating $\blup{\Sg}{T\fol}$ is the \textit{blow-up of $\fol$}.

\begin{obs}\label{obs foliaton along exceptional divisor}
Along the exceptional divisor, $\blup{\Sg}{\fol}$ admits a very natural geometric interpretation, as we now describe. Since $E = \mathbb{P}(\nu\Sg)$, we can view it as the quotient of a small distance cylinder $C_r(\Sg)$ (the sphere bundle of radius $r$ in $\nu\Sg$) by the antipodal involution: $E \cong C_r(\Sg)/\mathbb{Z}_2$. Formally, we make $r$ infinitesimally small, so that $\blup{\Sg}{\fol}|_E$ is governed by the linearization of the fields in $\mathfrak{X}(\fol)$ along $\Sg$. Linear vector fields naturally commute with scalar multiplication and, consequently, with the antipodal map $v \mapsto -v$, which ensures that $\fol_{C_r(\Sg)}$ descends properly to the quotient $C_r(\Sg)/\mathbb{Z}_2$. In this sense, $\blup{\Sg}{\fol}|_E$ is exactly the projectivization of the infinitesimal foliation induced by $\fol$ on the normal bundle $\nu\Sg$.
\end{obs}

In order to promote $\blup{\Sg}{\fol}$ to a singular Riemannian foliation, one must equip the blow-up $\blup{\Sg}{M}$ with a suitable bundle-like metric. Notice that the naive pullback of the original metric $\metric$ is insufficient, as $\pi^*\metric$ degenerates along $E$. In \cite{desmarcos}, Alexandrino desingularizes the metric by systematically inflating the collapsing directions. The construction is a little bit convoluted, as it requires a sequence of similar steps to be performed locally, around individual leaves and $\Sigma$, but we can synthesize the construction at the $\Sigma$ level as follows: 

\begin{itemize}
    \item \textbf{Local Homogenization:} Within a saturated tubular neighborhood $U$ of the minimal stratum $\Sg$, one first adapts the initial metric to ensure the distance cylinders $C_r(\Sg)$ are uniformly aligned with the leaves (see \cite[Proposition 3.1]{desmarcos}).
    
    \item \textbf{Radial Normalization:} Over $U \setminus \Sg$, we consider the orthogonal splitting $T(U \setminus \Sg) = \mathcal{H} \oplus \mathcal{V}$, where the vertical space $\mathcal{V}$ is tangent to the spherical fibers of $C_r(\Sg)$, and the horizontal space $\mathcal{H}$ comprises the radial direction $\partial_r$ alongside the directions parallel to $\Sg$. The desingularized metric $\tilde{\metric}$ is then defined explicitly by scaling the vertical component:
   \[\tilde{\metric}(X, Y) = \metric(X^\mathcal{H}, Y^\mathcal{H}) + \frac{1}{r^2} \metric(X^\mathcal{V}, Y^\mathcal{V}),\]
    for any $X, Y \in T(U \setminus \Sg)$, where $r$ is the radial distance to $\Sg$. Because the geometry of the fibers in $\metric$ shrinks on the order of $\mathcal{O}(r^2)$ as $r \to 0$, the $1/r^2$ factor precisely counters this collapse. Consequently, $\tilde{\metric}$ extends smoothly to a strictly positive, non-degenerate bundle-like metric $\hat{\metric}$ on $\blup{\Sg}{U}$, ensuring that the restriction $\pi \colon (E, \hat{\metric}|_E) \to (\Sg, \metric|_\Sg)$ becomes a well-defined Riemannian submersion (see \cite[Proposition 3.4]{desmarcos}).
    
    \item \textbf{Foliated Gluing:} Finally, to obtain a global, $\blup{\Sg}{\fol}$-bundle-like metric $\blup{\Sg}{\metric}$ on $\blup{\Sg}{M}$, the local metric $\hat{\metric}$ on the blow-up neighborhood is glued to the original metric $\metric$ on $M \setminus U$. To preserve the bundle-like property, the gluing must be performed using a basic partition of unity by bump functions that depend solely on the radial distance $r$, and thus are constant on each $C_r(\Sg)$ (see \cite[p. 412]{desmarcos})
\end{itemize}

Let us summarize this in the following:

\begin{prop}\label{def blup} Let $(M,\fol,\metric)$ be a complete singular Riemannian foliation. Then there exists a bundle-like metric $\blup{\Sg}{\metric}$ making $\blup{\Sg}{\fol}$ into a singular Riemannian foliation. With respect to it, $\pi|_E$ is a Riemannian submersion onto $\Sigma$, and $\pi|_{(\blup{\Sg}{M})\setminus (\blup{\Sg}{U})}$ is an isometry onto $M\setminus U$.
\end{prop}

Since $\operatorname{depth}(\blup{\Sg}{\fol})=\operatorname{depth}(\fol)-1$, one can repeatedly apply this blow-up process until the depth drops to $0$, thus obtaining a regular foliation, i.e., desingularizing $\fol$. In more detail, let us denote the minimal locus of $\fol$ by $\Sigma_0$ and $\blup{1}{M}:=\blup{\Sg_0}{M}$. For $k\geq 1$ we define by induction $\blup{k+1}{M}:=\blup{\Sigma_{k}}{\blup{k}{M}}$, so that $\pi_{k+1}\colon \blup{k+1}{M} \rar \blup{k}{M}$ is the blow-down map along the minimal locus $\Sg_k$ of $\blup{k}{\fol}$. Similarly, we apply this induction to the foliation and the metric, obtaining a sequence $(\blup{k}{M},\blup{k}{\fol}$ $\blup{k}{\metric})$. This process ends (becomes trivial) since $\operatorname{depth}(\fol)$ is finite and the blow up construction is trivial for regular foliations. This final stage, called the \emph{desingularization of $(M,\fol,\metric)$} , will be denoted simply by $(\blup{}{M},\blup{}{\fol},\blup{}{\metric})$, with projection $\pi:=\pi_{\operatorname{depth}(\fol)} \circ \hdots \circ \pi_1\colon \blup{}{M} \rar M$.

When $M$ is compact and $\fol$ is closed, by controlling the radii of the tubular neighborhoods used in the process, Alexandrino proves that this leads to a sequence of orbifolds converging, in the Gromov--Hausdorfff sense, to $M/\fol$:

\begin{thm}[{\cite[Theorem 1.5 and Corollary 1.6]{desmarcos}}] \label{g_h alex} Let $(M,\fol,\metric)$ be a closed singular Riemannian foliation on a compact Riemannian manifold. Then, for every $\varepsilon>0$, one can find a closed desingularization $\blup{}{\fol}$ such that, for every $x,y \in \blup{}{M}$,
\[|d_\metric(L_{\pi(x)},L_{{\pi(y)}})-d_{\blup{}{\metric}}(L_x,L_y)|<\varepsilon.\]
In particular, this implies that $M/\fol$ is a Gromov--Hausdorff limit of Riemannian orbifolds $\{(\blup{}{M})_i/(\blup{}{\fol})_i\}$.
\end{thm}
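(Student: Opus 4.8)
The plan is to reduce the theorem to a distance estimate for a \emph{single} blow-up, controlled by the radius of the tubular neighbourhood used, and then iterate; the ``in particular'' clause then comes for free. First, since $\fol$ is closed, $M/\fol=M/\overline{\fol}$ is a compact length metric space, and so is $\blup{}{M}/\blup{}{\fol}$; moreover $\blup{}{\fol}$ is regular (its depth is $0$) and still closed, because at each blow-up the leaves missing the exceptional divisor are preimages, under a proper foliated diffeomorphism, of closed leaves, while those contained in the exceptional divisor have the form $(\fol|_{S(r)})/\mathbb{Z}_2$ with $S(r)$ compact. Hence $\blup{}{M}/\blup{}{\fol}$ is a compact Riemannian orbifold of dimension $\codim(\fol)$, and the last assertion follows once the main estimate is proved by taking the target $\varepsilon$ to be $1/i$.

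For the reduction, recall that the full desingularisation is a composition of a fixed number $\operatorname{depth}(\fol)$ of blow-ups along minimal loci; since the leaf-space distortion of a composition is at most the sum of the individual distortions, it suffices to show: for $(M,\fol,\metric)$ with minimal locus $\Sigma$ and blow-up $\pi\colon(\blup{\Sigma}{M},\blup{\Sigma}{\metric})\to(M,\metric)$ of tube radius $\varepsilon$, there is a constant $C=C(M,\fol,\metric)$ with
\[
\bigl|d_\metric(L_{\pi(x)},L_{\pi(y)})-d_{\blup{\Sigma}{\metric}}(L_x,L_y)\bigr|\le C\varepsilon
\qquad\text{for all }x,y\in\blup{\Sigma}{M}.
\]
Given the target $\varepsilon$ of the statement, one then chooses the radius at each of the finitely many stages small enough that the accumulated distortion stays below $\varepsilon$.

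For the single-blow-up estimate I would use three facts coming from the construction of \cite{desmarcos}: \textbf{(a)} $\pi$ restricts to a foliated isometry $\blup{\Sigma}{M}\setminus\pi^{-1}(\tub_\varepsilon(\Sigma))\to M\setminus\tub_\varepsilon(\Sigma)$; \textbf{(b)} $\blup{\Sigma}{\metric}$ is ``$\varepsilon$-thin'' near $E$ — every point of $\pi^{-1}(\tub_\varepsilon(\Sigma))$ lies at $\blup{\Sigma}{\metric}$-distance $O(\varepsilon)$ from $E$, just as every point of $\tub_\varepsilon(\Sigma)$ lies at distance $<\varepsilon$ from $\Sigma$ — and both $\pi$, restricted to $\pi^{-1}(\tub_\varepsilon(\Sigma))$, and the normal projection $\pi_\Sigma\colon\tub_\varepsilon(\Sigma)\to\Sigma$ are $(1+O(\varepsilon))$-Lipschitz; \textbf{(c)} by Proposition~\ref{def blup}(4), $\pi\colon(E,\blup{\Sigma}{\fol}|_E)\to(\Sigma,\fol|_\Sigma)$ is a foliated Riemannian submersion, so the induced map of leaf spaces is $1$-Lipschitz and, conversely, any curve in $\Sigma$ between two leaves lifts horizontally, with the same length, to a curve in $E$ running from a prescribed leaf over the first endpoint to some leaf over the second. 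The estimate then splits into two one-sided bounds proved by the same device. For ``$\le$'': take a near-minimising curve $\gamma$ from $L_x$ to $L_y$ in $\blup{\Sigma}{M}$; if $\gamma$ meets $\overline{\pi^{-1}(\tub_\varepsilon(\Sigma))}$, let $[t_0,t_1]$ be the smallest interval containing all such times, project the outer arcs $\gamma|_{[0,t_0]}$, $\gamma|_{[t_1,1]}$ down by the isometry (a), and for the middle arc nudge $\gamma(t_0),\gamma(t_1)$ onto $E$ at cost $O(\varepsilon)$ using (b), then bound the $M/\fol$-distance of their $\pi$-images by $\ell_{\blup{\Sigma}{\metric}}(\gamma|_{[t_0,t_1]})+O(\varepsilon)$ using the $1$-Lipschitz half of (c); the triangle inequality yields $d_\metric(L_{\pi(x)},L_{\pi(y)})\le\ell_{\blup{\Sigma}{\metric}}(\gamma)+O(\varepsilon)$. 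For ``$\ge$'': take a near-minimising curve $\sigma$ from $L_{\pi(x)}$ to $L_{\pi(y)}$ in $M$, cut it at the first and last times it meets $\overline{\tub_\varepsilon(\Sigma)}$, lift the outer arcs isometrically by (a), and for the middle arc first project it into $\Sigma$ by $\pi_\Sigma$ (length multiplied by $1+O(\varepsilon)$), then lift it horizontally into $E$ by (c), and reconnect its endpoints to the lifts of the outer arcs at cost $O(\varepsilon)$ using (b) and (c); the multiplicative factor $1+O(\varepsilon)$ is absorbed into an additive $O(\varepsilon)$ because $\operatorname{diam}(M/\fol)$ is a fixed constant.

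The main obstacle, and the only place where one must genuinely open up the construction of \cite{desmarcos} rather than argue formally, is property \textbf{(b)}: that $\blup{\Sigma}{\metric}$ is honestly $O(\varepsilon)$-thin near $E$ and that the entry/exit geometry along $\partial\tub_\varepsilon(\Sigma)$ matches that of $(M,\metric)$ up to $O(\varepsilon)$, with constants uniform in $\varepsilon$. This is where one uses that, near $E$, the blow-up metric is assembled from $\pi^{*}(\metric|_\Sigma)$, a radial term $dt^{2}$ with $t$ ranging over an interval of length $O(\varepsilon)$, and a term along the $\mathbb{P}(\nu\Sigma)$-fibre directions scaled by $t$, so that those directions collapse as the radius shrinks; the uniform estimates then reduce to the continuous dependence of this construction on the radius parameter together with the compactness of $M$ and of $\Sigma$. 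With (b) available, the soft inputs (a) and (c) assemble into the two one-sided bounds and complete the argument.
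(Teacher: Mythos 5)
The paper does not prove this statement: Theorem~\ref{g_h alex} is quoted verbatim from \cite[Theorem 1.5 and Corollary 1.6]{desmarcos} and used as a black box, so there is no in-paper argument to compare against. Judged on its own, your sketch reconstructs the actual strategy of that reference faithfully: reduce to a single blow-up along the minimal locus, note that the accumulated distortion over the finitely many stages is subadditive (with the radii chosen sequentially, since each stage's constants depend on the previous choices), and for one stage combine the foliated isometry away from the tube, the Riemannian-submersion property of $\pi\colon E\to\Sigma$, and the collapse of the $\mathbb{P}(\nu\Sigma)$-fibre directions as the tube radius shrinks. You also correctly locate the one genuinely technical input, your item (b); it is worth stressing why it is indispensable: the fibres of $E\to\Sigma$ are \emph{not} single leaves of $\blup{\Sigma}{\fol}|_E$ in general, so two points of $\blup{\Sigma}{M}$ lying over the same leaf of $\fol$ can sit on distinct leaves upstairs, and the only reason their leaf-space distance is small is that the fibre metric carries a factor of the radius $t\le\varepsilon$. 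Without that collapse the estimate fails outright, which is exactly why the statement is ``for every $\varepsilon>0$ one can find a blow-up'' rather than a property of any fixed blow-up. Two minor points to tighten if this were written out in full: the closedness of $\blup{\Sigma}{\fol}$ needs the observation that the distance tubes $S(r)$ are saturated (which follows from the homothetic transformation lemma) so that the leaves inside $E=(\fol|_{S(r)})/\mathbb{Z}_2$ are closed; and in the ``$\ge$'' direction the reconnection of the horizontal lift to the outer arcs must land on the \emph{correct} leaves at both ends, which again uses that the fibre directions have been collapsed to size $O(\varepsilon)$. As a proof outline the proposal is sound; as a complete proof it defers the quantitative metric estimates to the construction in \cite{desmarcos}, which is appropriate given that the theorem is being cited rather than reproved.
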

\section{Dynamics of the Blown-up Foliation}
In \cite{desmolino}, Molino showed that, for the closure of a regular Riemannian foliation, the Molino sheaf of the blow-up foliation \( \blup{\Sigma}{\fol} \) coincides with the blow-up sheaf of \( \fol \). In this section, we extend this result to the general blow-up construction for singular Riemannian foliations introduced by Alexandrino, reviewed in Section~\ref{alexandrino blow}.

Let $U \subset M$ be an open set and $X \in \mathfrak{L}(\fol|_U)$ a local foliate vector field. Its local flow preserves $\fol|_U$, and therefore preserves all of its strata, so in particular $X$ is tangent to the minimal stratum $\Sg \cap U$. By our previously established lifting procedure, its local flow lifts to $\blup{\Sg}{U} = \pi^{-1}(U)$, defining a smooth lifted vector field $\blup{\Sg}{X}$ on $\blup{\Sg}{U}$.

\begin{prop}\label{propositon: blow up of foliate}
If $X \in \mathfrak{L}(\fol|_U)$, then $\blup{\Sg}{X}\in \mathfrak{L}(\blup{\Sg}{\fol}|_{\blup{\Sg}{U}})$. Moreover, the lifting map $\blup{\Sg}{}\colon \mathfrak{L}(\fol|_U) \to \mathfrak{L}(\blup{\Sg}{\fol}|_{\blup{\Sg}{U}})$ is linear and injective, and satisfies $\blup{\Sg}(\mathfrak{X}(\fol|_U))\subset \mathfrak{X}(\blup{\Sg}{\fol}|_{\blup{\Sg}{U}})$.
\end{prop}

\begin{proof} 
    The fact that $\blup{\Sg}{X} \in \mathfrak{X}(\blup{\Sg}{\fol}|_{\blup{\Sg}{U}})$ whenever $X \in \mathfrak{X}(\fol|_U)$ follows immediately from the definition of the blow-up foliation. For a general $X \in \mathfrak{L}(\fol|_U)$, let us verify that it preserves $\blup{\Sg}{\fol}$ by checking its Lie bracket with the generating family of $\blup{\Sg}{\fol}$. For any $Y \in \mathfrak{X}(\fol|_U)$, we know $[X,Y] \in \mathfrak{X}(\fol_U)$. Outside the exceptional divisor $E$, where $\pi$ is a local diffeomorphism, the lifted fields are $\pi$-related to their base fields, yielding $[\blup{\Sg}{X}, \blup{\Sg}{Y}] = \blup{\Sg}{[X,Y]}$. By continuity, this equality extends to all of $\blup{\Sg}{U}$. Since $\blup{\Sg}{[X,Y]} \in \mathfrak{X}(\blup{\Sg}{\fol}|_{\blup{\Sg}{U}})$, it follows that $\blup{\Sg}{X}$ preserves the distribution spanning $\blup{\Sg}{\fol}$, meaning $\blup{\Sg}{X} \in \mathfrak{L}(\blup{\Sg}{\fol}|_{\blup{\Sg}{U}})$. Finally, the linearity and injectivity of $\blup{\Sg}{}$ follow directly from continuity, since on the dense open set $\blup{\Sg}{U}\setminus E$, the lift $\blup{\Sg}{X}$ is simply the unique vector field $\pi$-related to $X$.
\end{proof}

It follows that, for a local transverse field $\overline{X}\in \mathfrak{l}(\fol_U)$, we can define its blow-up $\blup{\Sg}{\overline{X}} \in \mathfrak{l}(\blup{\Sg}{\fol}|_{\blup{\Sg}{U}})$ by $\overline{\blup{\Sg}{X}}$. In fact, if $X'\in\mathfrak{L}(\fol|_U)$ is another representative, then $X'=X+T$ for some $T\in\mathfrak{X}(\fol_U)$. By linearity,
\[\blup{\Sg}{X'}=\blup{\Sg}{X+T}=\blup{\Sg}{X}+\blup{\Sg}{T}.\]
Since Proposition \ref{propositon: blow up of foliate} guarantees $\blup{\Sg}{T} \in \mathfrak{X}(\blup{\Sg}{\fol}|_{\blup{\Sg}{U}})$, we get $\overline{\blup{\Sg}{X'}}=\overline{\blup{\Sg}{X}}$, proving the operation is well-defined.

Covariant objects lift straightforwardly via $\pi^*\colon \Omega^k(\f|_U)\to \Omega^k(\blup{\Sg}{\f}|_{\blup{\Sg}{U}})$ (more generally, local $\f$-basic covariant tensor fields pull via $\pi$ back to local $\blup{\Sg}{\f}$-basic covariant tensor fields). Each $\pi^*$ is also injective, again by continuity, since $\pi$ restricts to a diffeomorphism $\blup{\Sg}{U}\setminus E \to U\setminus (\Sg \cap U)$. In particular, there is a natural inclusion $C^{\infty}(\f|_U)\to C^\infty(\blup{\Sg}{\f}|_{\blup{\Sg}{U}})$. From this discussion, it is easy to conclude the following:

\begin{prop} \label{blup transverse} 
    For any open set $U \subset M$, the blow-up of foliate vector fields yields a well-defined, $C^{\infty}(\f|_U)$-linear map $\blup{\Sg}{}\colon\mathfrak{l}(\fol|_U)\to\mathfrak{l}(\blup{\Sg}{\fol}|_{\blup{\Sg}{U}})$.
\end{prop}

\color{black}

We define the blow-up of $\mathscr{C}_{\fol}$ as the inverse image
\[\blup{\Sg}{(\mathscr{C}_{\fol})}:= \pi^{-1}(\mathscr{C}_{\fol}).\]
Recall that, via sections, this can be seen, at the level of presheaves, as ``generalized germs'': given an open set $\tilde{U}\subset \blup{\Sg}{M}$, the value of $\blup{\Sg}{(\mathscr{C}_{\fol})}(\tilde{U})$ is the equivalence class of sections $\overline{Y}$ of $\mathscr{C}_{\fol}$  on open sets $U\subset M$ containing $\pi(\tilde{U})$, under the relation that identifies $\overline{Y}\in \mathscr{C}_{\fol}(U)$ with $\overline{Y}'\in \mathscr{C}_{\fol}(U')$ whenever they agree on a smaller $U''\subset U\cap U'$.

\begin{thm} \label{blup sheaf} Let $\fol$ be a singular Riemannian foliation of a compact manifold $M$. Then $\pi$ induces an isomorphism
\[
\mathscr{C}_{\blup{\Sg}{\fol}} \cong \blup{\Sg}{\mathscr{C}_{\fol}}.
\]
\end{thm}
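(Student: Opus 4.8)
The plan is to show that the Molino sheaf $\mathscr{C}_{\blup{\Sg}{\fol}}$ of the blown-up foliation and the blow-up sheaf $\blup{\Sg}{\mathscr{C}_{\fol}}$ coincide as locally constant sheaves of Lie algebras of germs of transverse Killing fields on $\blup{\Sg}{M}$. Since both are locally constant, and $\blup{\Sg}{M}\setminus E \cong M\setminus \Sg$ foliated-diffeomorphically via $\pi$, it suffices to establish the isomorphism on the regular part and then invoke the uniqueness of the continuous extension of a locally constant sheaf through the exceptional divisor $E$ (which has codimension at least $1$, so $\blup{\Sg}{M}\setminus E$ is dense and connected components are controlled). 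Concretely, the first step is to verify that if $X \in \mathscr{C}_{\fol}(\pi(\tilde U))$ represents a section of the Molino sheaf, then $\blup{\Sg}{X}$ is not merely foliate (already established in the preceding Proposition) but actually a \emph{transverse Killing field} for $\blup{\Sg}{\fol}$ with respect to the adapted metric $\blup{\Sg}{\metric}$ of Proposition \ref{def blup}. Away from $E$ this is immediate because $\pi$ is a foliated isometry there (on $\blup{\Sg}{M}\setminus \blup{\Sg}{U}$) and, on the tube, a foliated diffeomorphism intertwining the transverse metrics, so $\blup{\Sg}{X}$ restricts on each regular stratum of $\blup{\Sg}{M}\setminus E$ to a transverse Killing field; one then appeals to the defining property of the singular Molino sheaf (that a continuous section restricting to transverse Killing fields on each stratum is what is required) to conclude $\blup{\Sg}{X}$ determines a section of $\mathscr{C}_{\blup{\Sg}{\fol}}$.

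Next I would set up the sheaf morphism explicitly. Over the open set $\blup{\Sg}{M}\setminus E$, the assignment $X \mapsto \blup{\Sg}{X}$ is literally $\pi^{-1}$ applied to sections, and the diffeomorphism $\pi$ identifies $\mathscr{C}_{\fol}|_{M\setminus \Sg}$ with $\mathscr{C}_{\blup{\Sg}{\fol}}|_{\blup{\Sg}{M}\setminus E}$ directly from Molino's theory applied to the regular parts (since $\fol_{\reg}$ and $\blup{\Sg}{\fol}_{\reg}$ are foliated-isometric outside these sets, their Molino sheaves on the regular locus agree, and the Molino sheaf of a singular Riemannian foliation is by construction the extension of the regular one). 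Then I would note that both $\mathscr{C}_{\blup{\Sg}{\fol}}$ and $\blup{\Sg}{\mathscr{C}_{\fol}}$ are, by definition and by \cite[Lemma 6.5]{molino} respectively, locally constant sheaves on the connected manifold $\blup{\Sg}{M}$ (recall $\pi^{-1}\mathscr{C}_{\fol} \cong \blup{\Sg}{\mathscr{C}_{\fol}}$ as noted just before the theorem, and $\pi^{-1}$ of a locally constant sheaf is locally constant). A morphism of locally constant sheaves over a connected space that is an isomorphism on a nonempty open set is an isomorphism everywhere; alternatively, using that $\blup{\Sg}{M}$ deformation retracts — or at least, that the restriction maps of a locally constant sheaf to a dense open set are injective on a connected base — the agreement on $\blup{\Sg}{M}\setminus E$ propagates across $E$.

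The main obstacle I expect is the behavior \emph{along} $E$: I must check that the blow-up vector field $\blup{\Sg}{X}$ extends smoothly across $E$ as a transverse Killing field for $\blup{\Sg}{\fol}_{\reg}$ restricted to the strata meeting $E$, and that its germ there matches the stalk prescribed by the locally constant sheaf $\mathscr{C}_{\blup{\Sg}{\fol}}$ rather than, say, picking up extra components tangent to the fibers of $\pi|_E \colon E \to \Sg$. Here the key inputs are: Theorem \ref{blup vec} of \cite{arone}, which guarantees $\blup{\Sg}{X}$ is genuinely smooth on all of $\blup{\Sg}{M}$ (not just continuous), so there is no regularity loss to worry about at $E$ — this is precisely where the $\mathbb{R}$-action lifting is doing the work; and the description $\blup{\Sg}{\fol}|_E = (\fol|_{S(r)})/\mathbb{Z}_2$ together with the fact (Proposition \ref{def blup}(iv)) that $\pi\colon (E,\blup{\Sg}{\metric}) \to (\Sg,\metric)$ is a foliated Riemannian submersion, so that $\blup{\Sg}{X}|_E$ projects to $X|_\Sg$ and its transverse component is controlled by the transverse Killing property of $X|_\Sg$ on $\fol|_\Sg$. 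One then matches stalks: by local constancy the stalk of $\mathscr{C}_{\blup{\Sg}{\fol}}$ at a point of $E$ equals that at a nearby point of $\blup{\Sg}{M}\setminus E$, which is the stalk of $\blup{\Sg}{\mathscr{C}_{\fol}}$ by the already-constructed identification, and the counting of dimensions (the leaf-closure dimension formula $T_y\overline{L}_y = T_yL_y \oplus \{Z(y)\mid Z\in \mathscr{C}(U)\}$, now applied on $\blup{\Sg}{M}$) forces the two sections to agree as germs. Assembling these pieces gives the desired sheaf isomorphism $\mathscr{C}_{\blup{\Sg}{\fol}}\cong \blup{\Sg}{\mathscr{C}_{\fol}}$, and in particular when $\mathscr{C}_{\fol}$ is globally constant so is $\blup{\Sg}{\mathscr{C}_{\fol}}\cong\pi^{-1}\mathscr{C}_{\fol}$, yielding the Killing-foliation corollary after iterating over the finitely many blow-ups in the desingularization.
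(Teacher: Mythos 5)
Your proposal is correct and follows essentially the same route as the paper: identify $\mathscr{C}_{\blup{\Sg}{\fol}}$ with $\pi^{-1}(\mathscr{C}_{\fol})$ over the (dense, saturated, open) preimage of the regular locus via the foliated diffeomorphism $\pi\colon \blup{\Sg}{M}\setminus E \to M\setminus\Sg$, then invoke the uniqueness of the extension of the locally constant Molino sheaf across $E$, together with the identification $\pi^{-1}(\mathscr{C}_{\fol})\cong \blup{\Sg}{\mathscr{C}_{\fol}}$. The additional verifications you flag along $E$ (smoothness of $\blup{\Sg}{X}$ via Theorem \ref{blup vec} and the Killing property stratum by stratum) are consistent with, and slightly more detailed than, what the paper records.
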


\begin{proof}
    Since $\pi$ restricts to a foliated diffeomorphism $\pi^{-1}(M_{\mathrm{reg}})\to M_{\mathrm{reg}}$, it is clear that $\mathscr{C}_{\blup{\Sg}{\fol}}$ and $\blup{\Sg}({\mathscr{C}_{\fol}})$ are isomorphic when restricted to $\pi^{-1}(M_{\mathrm{reg}})$: the pullback by $\pi$ identifies their sections. But this determines both sheaves completely, since $\mathscr{C}_{\blup{\Sg}{\fol}}$ is, by definition, the continuous extension of the Molino sheaf of $(\blup{\Sg}{\fol})_{\mathrm{reg}}$, and $\pi^{-1}(M_{\mathrm{reg}})\subset (\blup{\Sigma}{M})\setminus E$ is dense in $(\blup{\Sigma}{M})_{\mathrm{reg}}$.
\end{proof}

 Notice that, if $\mathscr{C}_\fol$ is smooth, then, also by continuity, the isomorphism $\mathscr{C}_{\blup{\Sg}{\fol}} \cong \blup{\Sg}{\mathscr{C}_{\fol}}$ is realized by the lift $\blup{\Sigma}{}$ of local transverse fields.

\begin{cor}\label{blup kil is kil} If $\fol$ is a singular Killing foliation of a compact manifold $M$, then $\blup{\Sg}{\fol}$ is a singular Killing foliation, and the structural algebras $\mathfrak{g}_\fol$ and $\mathfrak{g}_{\blup{\Sg}{\fol}}$ are isomorphic.
\end{cor}

\begin{proof}
The fist claim is clear from what we just saw and the fact that inverse images of constant sheaves are constant. For the second one, using Theorem \ref{blup sheaf}, for any regular point $x\in M$ and its preimage $\hat{x}\in \blup{\Sg}{M}$,
\[\mathfrak{g}_\fol \cong \mathrm{
stalk}_x(\mathscr{C}_{\fol})^{\mathrm{op}} \cong \mathrm{stalk}_{\hat{x}}(\blup{\Sg}{\mathscr{C}_{\fol}})^{\mathrm{op}} \cong \mathrm{stalk}_{\hat{x}}(\mathscr{C}_{\blup{\Sg}{\fol}})^{\mathrm{op}} \cong \mathfrak{g}_{\blup{\Sg}{\fol}}.\qedhere\]
\end{proof}

Applying this repeatedly, we get that singular Killing foliations desingularize to Killing foliations:

\begin{cor}\label{corollary: desing killing is killing} If $\fol$ is a singular Killing foliation of a compact manifold $M$, then $\blup{}{\fol}$ is a (regular) Killing foliation, and $\mathfrak{g}_\fol\cong \mathfrak{g}_{\blup{}{\fol}}$.
\end{cor}

We now recall a result of Gitler \cite{gitler}, who computed the singular cohomology (with $\mathbb{Z}_{2}$ coefficients) of the blow-up  $\blup{\Sigma}{M}$. This result serves as the main motivation for the next
section, where we adapt Gitler's argument to the foliated setting. Suppose $\Sg$ is a closed submanifold of $M$. Gitler proved that
\begin{equation}\label{gitler equation}
    H^i(\blup{\Sg}{M},\mathbb{Z}_2) \cong \pi^*(H^i(M,\mathbb{Z}_2)) \oplus \dfrac{H^*(E,\mathbb{Z}_2)}{\pi^*(H^i(\Sg,\mathbb{Z}_2)).}
\end{equation}
If $M$ is compact, we also recall that the Euler characteristic of $M$ can be computed as
\[\chi(M)=\sum_n(-1)^n\dim H_n(M;F)\]
where $H_n(M;F)$ are the homology group with coefficients in an arbitrary field $F$ \cite[Theorem 2.44]{hatcher}. We may now easily compute the Euler characteristic of 
\(\blup{\Sg}{M}\). The injectivity of the induced pullback map \(\pi^*\)  follows from Poincaré duality for homology with \(\mathbb{Z}_2\) coefficients, as established by via the Gysin map in \cite{gitler}. Therefore, from equation~\eqref{gitler equation}, it follows that
$$ 
\dim H^i(\blup{\Sg}{M},\mathbb{
Z}_2) +  \dim H^i(\Sg,\mathbb{Z}_2)   =  \dim H^i(E,\mathbb{Z}_2) + \dim H^i(M,\mathbb{Z}_2). $$
The alternate sum on $i$ then leads to
\[\chi(\blup{\Sg}{M})+\chi(\Sg) = \chi(E) +\chi(M).\]
Furthermore, from the fiber bundle property for Euler characteristic we know that $\chi(E)=\chi(\Sg)\chi(\RR\mathbb{P}^{k-1})$, where $k=\codim(\Sg)$. Thus
\[\chi(\blup{\Sg}{M})=\chi(\Sg)(\chi(\mathbb{RP}^{k-1})-1)+\chi(M),\]
and so
\begin{equation} \label{euler charac}
  \chi(\blup{\Sg}{M}) = \begin{cases}
    \chi(M) & \text{if} \ k \ \text{is odd},  \\
   
    \chi(M) - \chi(\Sg)  & \text{if} \ k \ \text{is even}.
\end{cases} 
\end{equation}
We will now apply this to prove Theorem \ref{thmB}, by reducing it to the regular case, which was proven in \cite[Theorem 9.1]{poscara} (see also \cite[Theorem 6.4]{alex4}): namely, any regular Killing foliation of a compact manifold $M$ with $\chi(M) \neq 0$ is closed.

Let us understand what happens with the Euler characteristic in the blow-up process. Using $\eqref{euler charac}$, we observe that, for every blow-up along a minimal stratum $\Sg$ (here we will use the refined ``stratum by stratum'' process, as we discussed in Section \ref{alexandrino blow}), supposing $\chi(M)\neq 0$ we have:
\begin{enumerate}
    \item If $\Sg$ has odd codimension, then $\chi(\blup{\Sg}{M}) \neq 0$.
    \item If $\Sg$ has even codimension and $\chi(M) \neq \chi(\Sg)$, then $\chi(\blup{\Sg}{\fol})\neq 0$.
\end{enumerate}

In order to desingularize $\fol$ we must do a finite number of blow-ups, and in each step both cases may occur. So, proceeding inductively, we are led to consider the collection $\Sigma_1,\dots,\Sigma_\ell$ of all the even-codimensional minimal strata appearing in the blow-up process (notice that not all of them are strata of $\fol$, but rather of its blow-ups) and the number
\[\mathscr{L}:=\sum_{j=1}^\ell \chi(\Sigma_{j}).\]

\begin{thm} \label{closed leaves kill} Let $\fol$ be a singular Killing foliation of a compact manifold $M$. If $\chi(M)\neq \mathscr{L}$, then $\fol$ is closed.
\end{thm}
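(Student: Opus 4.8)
The plan is to reduce the singular case to the regular one via the desingularization procedure and track the Euler characteristic through the blow-up tower. First I would record the key reduction: by Corollary~\ref{blup kil is kil}, applied inductively along the sequence $(\blup{k}{M},\blup{k}{\fol})$, the final blow-up $(\blup{}{M},\blup{}{\fol})$ is a \emph{regular} Killing foliation on a compact manifold; and by Corollary~\ref{structural algebra}, it has the same structural algebra $\mathfrak{g}$ as $\fol$. Next, recall the fact from the regular theory (\cite[Theorem 9.1]{poscara}): a regular Killing foliation of a compact manifold with non-vanishing Euler characteristic is closed. So it suffices to show $\chi(\blup{}{M})\neq 0$, for then $\blup{}{\fol}$ is closed, hence $\mathfrak{g}=0$, hence $\fol$ itself is closed (the structural algebra vanishes precisely when all leaf closures are leaves).

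The remaining point is the Euler characteristic bookkeeping. At each blow-up step $\blup{k+1}{M}=\blup{\Sigma_k}{\blup{k}{M}}$, formula~\eqref{euler charac} gives $\chi(\blup{k+1}{M})=\chi(\blup{k}{M})$ when $\Sigma_k$ has odd codimension, and $\chi(\blup{k+1}{M})=\chi(\blup{k}{M})-\chi(\Sigma_k)$ when $\Sigma_k$ has even codimension. Composing these over the whole finite tower, the odd-codimensional steps contribute nothing and the even-codimensional steps subtract exactly the $\chi(\Sigma^j)$ of the even-codimensional minimal loci $\Sigma^1,\dots,\Sigma^\ell$ encountered, so
\[
\chi(\blup{}{M}) \;=\; \chi(M)-\sum_{j=1}^\ell \chi(\Sigma^j)\;=\;\chi(M)-L.
\]
Under the hypothesis $\chi(M)>L$ this is strictly positive, in particular non-zero, which closes the argument. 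When all singular strata of $\fol$ are odd-codimensional one checks that every minimal locus appearing in the process is odd-codimensional as well (the blow-up decreases depth and the strata of $\blup{k}{\fol}$ are governed by those of $\fol$), so $\ell=0$, $L=0$, and the conclusion follows from $\chi(M)\neq 0$ directly.

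The main obstacle I anticipate is making the Euler characteristic formula~\eqref{euler charac} legitimately applicable at each stage: the derivation of~\eqref{euler charac} via Gitler's computation~\eqref{gitler equation} uses Poincaré duality with $\mathbb{Z}_2$ coefficients for the pair $(\blup{\Sigma_k}{\blup{k}{M}},E_k)$, which is fine since all the blow-up spaces are compact smooth manifolds, so~\eqref{euler charac} does hold verbatim at every step. One must also be slightly careful that the minimal locus $\Sigma_k$ of $\blup{k}{\fol}$ is indeed a closed submanifold (guaranteed by the discussion in Section~\ref{section blow up of srf}) and that its codimension is computed inside $\blup{k}{M}$; the definition of $L$ in the statement already refers to these loci ``appearing in the blow-up process,'' so no new subtlety arises — the telescoping is purely formal once~\eqref{euler charac} is in hand.
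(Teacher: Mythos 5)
Your proposal is correct and follows essentially the same route as the paper's proof: reduce to the regular case via the desingularization tower, use Corollary~\ref{blup kil is kil} and Corollary~\ref{structural algebra} to transfer the Killing property and the structural algebra, telescope formula~\eqref{euler charac} to get $\chi(\blup{}{M})=\chi(M)-L\neq 0$, and conclude with \cite[Theorem 9.1]{poscara} and Molino's structural theorem. Your additional remarks on the applicability of~\eqref{euler charac} at each stage and on the odd-codimensional case are consistent with the paper's treatment.
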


\begin{proof}
Applying \eqref{euler charac} inductively one easily sees that $\chi(\blup{}{M})= \chi(M) - \mathscr{L}$. Hence $\chi(\blup{}{M}) \neq 0$ if and only if $\chi(M) \neq \mathscr{L}$. Notice that $\blup{}{M}$ is also compact and that $\blup{}{\fol}$ is a Killing foliation, by Corollary \ref{corollary: desing killing is killing}. If $\chi(\blup{}{M}) \neq 0$, then by \cite[Theorem 9.1]{poscara} it follows that $\blup{}{\fol}$ is closed, hence $\fol$ must also be closed. For instance, $\mathfrak{g}_{\blup{}{\fol}}$ is trivial, so $\mathfrak{g}_\fol$ is trivial as well, again by Corollary~\ref{corollary: desing killing is killing}. Therefore $\fol$ is closed, by Molino’s structural theorem \cite[Theorem~5.2]{molino}.
\end{proof}

It is worth highlighting the following special case, which doesn't require information about the strata of blow-ups of $\fol$:

\begin{cor}\label{cor odd dimensions}
   Let $\fol$ be a singular Killing foliation on a compact manifold $M$ with $\chi(M) \neq 0$ and suppose all singular strata of $\fol$ are odd-codimensional. Then $\fol$ is closed.
\end{cor}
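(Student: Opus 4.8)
The plan is to derive the corollary by running the argument in the proof of Theorem~\ref{closed leaves kill} with $\ell=0$. Concretely, I claim that under the hypothesis that all singular strata of $\fol$ are odd-codimensional, every locus blown up in the desingularization process is odd-codimensional; by \eqref{euler charac} the Euler characteristic is then unchanged at each step, so $\chi(\blup{}{M})=\chi(M)\neq 0$. The final foliation $\blup{}{\fol}$ is regular by construction and Killing by Corollary~\ref{blup kil is kil}, hence closed by \cite[Theorem 9.1]{poscara}; therefore $\mathfrak{g}_{\blup{}{\fol}}$ is trivial, and by Corollary~\ref{structural algebra} so is $\mathfrak{g}_{\fol}\cong\mathfrak{g}_{\blup{}{\fol}}$, whence $\fol$ is closed by Molino's structural theorem \cite[Theorem 5.2]{molino}.

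The entire content is therefore the claim that the blow-up process blows up only odd-codimensional loci, i.e. that $\ell=0$. I would prove this by induction on $\operatorname{depth}(\fol)$, using the following stratification fact: \emph{if a singular Riemannian foliation has all of its singular strata odd-codimensional, then so does the blow-up of it along its minimal locus.} Granting this, at each stage the locus one blows up along is the minimal locus --- hence a singular stratum --- of the current foliation, and is therefore odd-codimensional; so none of $\Sigma^1,\dots,\Sigma^\ell$ occurs.

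To establish the stratification fact, write $\Sg$ for the minimal locus of $\fol$ and analyze the strata of $\blup{\Sg}{\fol}$ off and on the exceptional divisor $E$ separately. Away from $E$ the map $\pi$ is a foliated diffeomorphism onto $M\setminus\Sg$, so there the strata of $\blup{\Sg}{\fol}$ and their codimensions coincide with those of $\fol$, and in particular are odd-codimensional. Along $E$, recall that by transnormality the gradient of the distance to $\Sg$ is a radial geodesic field orthogonal to every leaf, so the leaves of $\fol$ in a tubular neighborhood of $\Sg$ lie in the distance tubes $S(r)$; since $\blup{\Sg}{\fol}|_E=(\fol|_{S(r)})/\mathbb{Z}_2$ with the antipodal action free and foliated, the leaves of $\blup{\Sg}{\fol}$ contained in $E$ have the same dimensions as the corresponding spherical leaves of $\fol$. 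Hence, for each singular stratum $\Sigma_\beta$ of $\fol$ adjacent to $\Sg$ (that is, $\Sg\subset\overline{\Sigma_\beta}$), the proper transform $\widetilde{\Sigma_\beta}$ meets $E$ in a submanifold of codimension one in $\widetilde{\Sigma_\beta}$ whose leaves have the same dimension as those of $\widetilde{\Sigma_\beta}\setminus E\cong\Sigma_\beta$; consequently these two pieces (possibly together with the transform of the antipodal stratum) form a single stratum $\widehat{\Sigma_\beta}$ of $\blup{\Sg}{\fol}$ with $\codim_{\blup{\Sg}{M}}\widehat{\Sigma_\beta}=\codim_M\Sigma_\beta$, which is odd. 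The stratum $\Sg$ itself has been removed, $E$ being part of the regular locus of $\blup{\Sg}{\fol}$ at its generic points. Thus every singular stratum of $\blup{\Sg}{\fol}$ is odd-codimensional.

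The delicate step, and the one I expect to require a genuine local computation, is the last merging assertion: that the trace $\widetilde{\Sigma_\beta}\cap E$ is not a separate (hence even-codimensional) stratum but assembles with $\widetilde{\Sigma_\beta}\setminus E$ into a single smooth stratum. Verifying this --- in particular, that the leaf-dimension function is locally constant across $E$ along $\widetilde{\Sigma_\beta}$, and that $\widetilde{\Sigma_\beta}$, together with its antipodal companion, is a smooth manifold --- requires the explicit local model of the blow-up near $E$, built from the cone structure of $\fol$ in a distinguished tubular neighborhood of $\Sg$ and the homothetic transformation Lemma~\ref{homothetic lemma}; the non-orientability of the tautological normal line bundle of $E$ in $\blup{\Sg}{M}$ is what allows the transform to wrap around $E$ instead of acquiring a boundary there.
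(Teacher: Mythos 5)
Your overall strategy is the intended one: the paper offers no separate proof of this corollary, treating it as the case $\ell=0$ of Theorem~\ref{closed leaves kill}, and you correctly isolate the only substantive point, namely that every minimal locus encountered in the iterated blow-up remains odd-codimensional, so that \eqref{euler charac} gives $\chi(\blup{}{M})=\chi(M)\neq 0$ and \cite[Theorem 9.1]{poscara} applies to the regular Killing foliation $\blup{}{\fol}$ (here Corollary~\ref{blup kil is kil} is needed at each stage). Your route even avoids the hypothesis $\chi(M)>L$ of the theorem by passing directly to $\chi(\blup{}{M})\neq 0$, which is what its proof actually uses.

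That said, you explicitly leave your ``delicate step'' unproven --- that the trace of a transformed singular stratum on $E$ does not form a separate stratum but assembles with its part off $E$ into one stratum of the original codimension --- so as written there is a gap. It closes more cheaply than the local computation you anticipate. Since $\blup{\Sg}{\fol}$ is again a singular Riemannian foliation (Proposition~\ref{def blup}), Molino's stratification theorem \cite[Proposition 6.3]{molino} already guarantees that each of its strata is a connected embedded submanifold; no hand-verification of smoothness or of the gluing across $E$ is required. The only thing to check is that no singular stratum $\widehat{\Sigma}$ of $\blup{\Sg}{\fol}$ is entirely contained in $E$. Granting this, $\widehat{\Sigma}\setminus E$ is a nonempty open subset of $\widehat{\Sigma}$ which, under the foliated diffeomorphism $\pi\colon\blup{\Sg}{M}\setminus E\to M\setminus\Sg$, is identified with an open subset of some singular stratum $\Sigma_\beta$ of $\fol$; hence $\codim\widehat{\Sigma}=\codim\Sigma_\beta$ is odd. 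And the non-containment in $E$ is exactly the homothetic transformation Lemma~\ref{homothetic lemma}: a leaf of $\blup{\Sg}{\fol}$ lying in $E$ corresponds to a leaf of $\fol$ in a distance tube $S(r)$, and the homothetic images $h_\lambda$ of that leaf give leaves of the same dimension at all nearby radii, i.e.\ points of the same dimension-locus arbitrarily close to, but off, $E$; so the connected component of that locus is not contained in $E$. One further point your induction should state explicitly: a minimal locus may have several components of different (odd) codimensions, so in \eqref{euler charac} one argues component by component, each contributing $\chi(\Sigma^{\alpha})\bigl(\chi(\mathbb{RP}^{k_\alpha-1})-1\bigr)=0$; the conclusion $\chi(\blup{}{M})=\chi(M)$ is unaffected.
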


This comes with a consequence for singular Riemannian foliation on manifolds with finite fundamental groups, generalizing \cite[Theorem F]{poscara}.

\begin{cor}\label{corollary: singular riemannian closed}
    Let $\fol$ be a singular Riemannian foliation of a compact manifold $M$. Suppose $\mathscr{C}_\fol$ is smooth and all singular strata of $\fol$ are odd-codimensional. If $\#\pi_1(M) < \infty$ and $\chi(M)\neq 0$, then $\fol$ is closed.
\end{cor}

\begin{proof}
   Let $\rho\colon\hat{M}\to M$ be the universal covering of $M$ and consider $\hat{\fol}:=\rho^*(\fol)$, which is hence a singular Killing foliation. Moreover for any $L\in\fol$, choose $\hat{L}\in\hat{\fol}$ projecting to $L$. But clearly, all the singular strata of $\hat{\fol}$ are also odd-codimensional, therefore Corollary \ref{cor odd dimensions} implies that $\hat{\fol}$ is closed. In particular, since $\rho$ is a closed map,
   \[\overline{L}=\overline{\rho(\hat{L})}=\rho\left(\overline{\hat{L}}\right)=\rho(\hat{L})=L,\]
   so $L$ (hence $\fol$) is closed. Alternatively, we could argue via the structural algebra: we have $\mathfrak{g}_{\fol}\cong \mathfrak{l}({\fol}|_{\overline{L}})$ and, since $\fol|_{\overline{L}}$ is regular, \cite[Corollary 3.6]{poscara} implies that $\mathfrak{g}_{\fol}\cong \mathfrak{g}_{\hat{\fol}}=0$, so $\fol$ is closed.
\end{proof}
\section{Blown-up Leaf Spaces}

In this section we present another application of Theorem \ref{blup sheaf} (in fact, of Corollary \ref{corollary: desing killing is killing}), regarding the leaf closures spaces of singular Killing foliations. We will couple Alexandrino's result on the Gromov--Hausdorfff approximation of the leaf spaces of blow-ups (already seen in Theorem \ref{g_h alex}) with a similar one, by Caramello and Neubauer \cite{chicos}, in order to realize  $M/\overline{\fol}$ as a limit of $\codim(\fol)$-dimensional orbifolds. As mentioned in the introduction, the novelty here is the control in the dimension, which we hope can lead to further consequences via convergence of Alexandrov spaces. Let us start by stating the aforementioned result precisely:

\begin{thm}[{\cite[Theorem 4.1]{chicos}}] Let $\fol$ be a Killing foliation of a connected, compact manifold $M$. Then there exists a sequence of closed foliations $\mathcal{G}_i$ of $M$ such that $M/{\mathcal{G}_i} \xrightarrow{\textnormal{GH}}  M/\overline{\fol}$.
\end{thm}

Now Theorem \ref{thmD} of the introduction is an easy application of Corollary \ref{blup kil is kil} and the aforementioned results by Alexandrino and Caramello-Neubauer.

\begin{thm}\label{chicos def}Let $\fol$ be a $d$-codimensional singular Killing foliation of a connected, compact manifold $M$. Then there exists a sequence of $d$-codimensional closed Riemannian foliations $\mathcal{G}_i$ of blow-up spaces $(\blup{}{M})_i$ of $M$ such that $(\blup{}{M})_i/\mathcal{G}_i \xrightarrow{\textnormal{GH}} M/\overline{\fol}$.  
\end{thm}
\noindent Notice that, in the above setting, the quotients $(\blup{}{M})_i/\mathcal{G}_i$ are $d$-dimensional orbifolds \cite[Theorem 2.15]{mrcun}.
\begin{proof}
By Theorem \ref{g_h alex}, there exists a sequence $\{((\blup{}{M})_i,(\blup{}{\fol})_i)\}_{i \in \mathbb{N}}$ such that
    \[(\blup{}{M})_i/\overline{(\blup{}{\fol})_i} \xrightarrow{\ \textnormal{GH}\ } M/\overline{\fol}.\]
By Theorem \ref{blup kil is kil} we know that each $(\blup{}{\fol})_i$ is a (regular) Killing foliation, so Theorem \ref{chicos def} provides, for each $i\in \mathbb{N}$, a sequence $\{\mathcal{G}_{j_{i}}\}_{j_i} \in \mathbb{N}$ of closed foliations verifying
\[(\blup{}{M})_i/\mathcal{G}_{j_{i}}\xrightarrow{\ \textnormal{GH}\ }(\blup{}{M})_i/\overline{(\blup{}{\fol})_i}.\]

Now for each $k\in \mathbb{N}$, there is $N(k)\in\mathbb{N}$ such that $i>N(k)$ implies
\[d_{\textnormal{GH}}\left((\blup{}{M})_i/\overline{(\blup{}{\fol})_i},M/\overline{\fol}\right)<\frac{1}{2k}.\]
Likewise, for each $i$, we can choose $M_i(k)\geq N(k)$ such that $j_i>M_i(k)$ implies
\[d_{\textnormal{GH}}\left((\blup{}{M})_i/\mathcal{G}_{j_{i}},(\blup{}{M})_i/\overline{(\blup{}{\fol})_i}\right)<\frac{1}{2k}.\]
So, defining $\mathcal{G}_k:=\mathcal{G}_{M_i(k)}$, the triangle inequality gives us
\[d_{\textnormal{GH}}\left((\blup{}{M})_k/\mathcal{G}_k,M/\overline{\fol}\right)<\frac{1}{k}\longrightarrow 0.\]
\end{proof}
\section{Basic Cohomology of the Blown-up Foliation}\label{section: bas cohom of blowup}

We start with following technical tool to be used in the proof of Theorem \ref{principal result intro} of the introduction.

\begin{lemma} \label{hom lemma}
   Let 
  \[\begin{tikzcd}[sep=large]
	\dots & {A^{k-1}} & {A^k} & {A^{k+1}} & \dots \\
	\dots & {B^{k-1}} & {B^k} & {B^{k+1}} & \dots
	\arrow["{f^{k-2}}", from=1-1, to=1-2]
	\arrow["{f^{k-1}}", from=1-2, to=1-3]
	\arrow["{f^k}", from=1-3, to=1-4]
	\arrow["{f^{k+1}}", from=1-4, to=1-5]
	\arrow["{g^{k-2}}", from=2-1, to=2-2]
	\arrow["{\pi^{k-1}}", from=2-2, to=1-2]
	\arrow["{g^{k-1}}", from=2-2, to=2-3]
	\arrow["{\pi^{k}}", from=2-3, to=1-3]
	\arrow["{g^k}", from=2-3, to=2-4]
	\arrow["{\pi^{k+1}}", from=2-4, to=1-4]
	\arrow["{g^{k+1}}", from=2-4, to=2-5]
\end{tikzcd}\]
be a commutative diagram of vector spaces, where both rows are exact sequences. Then the following sequence is a cochain complex 
\[\begin{tikzcd}
	\dots & {\dfrac{A^{k-1}}{\pi^{k-1}(B^{k-1})}} & {\dfrac{A^k}{\pi^{k}(B^k)}} & {\dfrac{A^{k+1}}{\pi^{k+1}(B^{k+1})}} & \dots
	\arrow["{\overline{f^{k-2}}}", from=1-1, to=1-2]
	\arrow["{\overline{f^{k-1}}}", from=1-2, to=1-3]
	\arrow["{\overline{f^k}}", from=1-3, to=1-4]
	\arrow["{\overline{f^{k+1}}}", from=1-4, to=1-5]
\end{tikzcd}\]
where $\overline{f^k}([a])=[f^k(a)]$. Moreover, if all maps $(\pi_k)_{k \in \mathbb{N}}$ are injective, this sequence is exact. 

\end{lemma}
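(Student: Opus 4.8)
The plan is to establish all three claims by elementary diagram chasing, using only the exactness of the two rows and the commutativity relation $\pi^{j+1}\circ g^{j}=f^{j}\circ\pi^{j}$, valid for every $j$. First I would verify that $\overline{f^{k}}$ is well defined: if $a=\pi^{k}(b)$ lies in $\pi^{k}(B^{k})$, then commutativity gives $f^{k}(a)=\pi^{k+1}(g^{k}(b))\in\pi^{k+1}(B^{k+1})$, so $\overline{f^{k}}([a])$ is independent of the chosen representative. The cochain property $\overline{f^{k}}\circ\overline{f^{k-1}}=0$ is then immediate, since $\overline{f^{k}}(\overline{f^{k-1}}([a]))=[f^{k}f^{k-1}(a)]=0$ because $f^{k}\circ f^{k-1}=0$ in the exact top row. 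This settles the first assertion, and uses nothing about the maps $\pi^{j}$.

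For the second assertion I would assume each $\pi^{j}$ injective and prove $\ker\overline{f^{k}}\subseteq\im\overline{f^{k-1}}$, the reverse inclusion being the cochain property just established. Given $[a]\in\ker\overline{f^{k}}$, write $f^{k}(a)=\pi^{k+1}(b')$ with $b'\in B^{k+1}$. Applying $f^{k+1}$ and using $f^{k+1}f^{k}=0$ together with commutativity, one gets $\pi^{k+2}(g^{k+1}(b'))=0$, whence $g^{k+1}(b')=0$ by injectivity of $\pi^{k+2}$; exactness of the bottom row then yields $b'=g^{k}(b)$ for some $b\in B^{k}$. Consequently $f^{k}(a)=\pi^{k+1}(g^{k}(b))=f^{k}(\pi^{k}(b))$, so $a-\pi^{k}(b)\in\ker f^{k}=\im f^{k-1}$, say $a-\pi^{k}(b)=f^{k-1}(a')$; hence $[a]=\overline{f^{k-1}}([a'])$, as required.

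The injectivity hypothesis enters the argument at exactly one point, namely deducing $g^{k+1}(b')=0$ from $\pi^{k+2}(g^{k+1}(b'))=0$, and this is the only step that is not pure formal manipulation of quotients, so it is where the substance of the refined statement lies. I do not expect any further obstacle beyond careful bookkeeping of the indices.
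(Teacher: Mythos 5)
Your proof is correct and follows essentially the same diagram chase as the paper's: the well-definedness via $f^k\circ\pi^k=\pi^{k+1}\circ g^k$, the cochain property from exactness of the top row, and the exactness argument that pushes $f^k(a)=\pi^{k+1}(b')$ one step further to deduce $g^{k+1}(b')=0$ from injectivity of $\pi^{k+2}$, pulls $b'$ back along $g^k$, and lands $a-\pi^k(b)$ in $\im f^{k-1}$. Your index bookkeeping is in fact cleaner than the paper's, which contains a couple of harmless index slips in the same chase.
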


\begin{proof}
    First we need need to show that the induced maps are well-defined: let $a=\tilde{a}+\pi^k(b)$, where $b \in B^k$, then $\overline{f^k}[a]=[f^k(a)]=[f^k(\widetilde{a})+f^k(\pi^k(b))]=[f^k(\widetilde{a})]+[f^k(\pi^k(b))]$. By the commutativity of the diagram, we have that $f^k \circ \pi^k = \pi^{k+1} \circ g^k$, which results in $[f^k(\pi^k(b))]=[\pi^{k+1}(g^k(b))]$, thus $\overline{f^k}[a]=[f^k(\widetilde{a})]+[\pi^{k+1}(g^k(b))]=[f^k(\widetilde{a})+\pi^{k+1}(g^k(b))]=\overline{f^k}[\widetilde{a}]$. We obtain therefore that $\overline{f^k}$ is well-defined, for every $k \in \mathbb{N}$. It follows that the induced sequence is a cochain complex. 
   
    Now suppose all maps $\pi_k$ are injective. Consider $[f^k(a)] \in \im \overline{f^k}$. Since the rows are exact sequences, we know that $\im f^k  \subset \ker f^{k+1}$, therefore
    \[\overline{f^{k+1}}([f^k(a)])=[f^{k+1}(f^k(a))]=0.\]
    as $f^k(a) \in \im f^k$. To show that it is exact, it remains to prove that $\ker \overline{f^{k+1}} \subset \im \overline{f^k}$. Let $[a] \in \ker \overline{f^{k+1}}$, then \[\overline{f^{k+1}}([a])=[f^{k+1}(a)]=0,\] thus $f^{k+1}(a)=\pi^{k+2}(b)$, for some $b \in B^{k+1}$. As $\ker f^{k+2} = \im f^{k+1}$, we have $f^{k+2}(f^{k+1})(a)=0$. Moreover, due to commutativity, we know that $f^{k+2}\circ \pi^{k+2} = \pi^{k+3} \circ g^{k+2}$, which implies
    \[0= f^{k+2}(f^{k+1}(a))=f^{k+2}(\pi^{k+2}(b))=\pi^{k+3}(g^{k+2}(b)).\]
    By hypothesis, $\pi^{k+3}$ is injective, therefore $g^{k+2}(b)=0$, that is, $b \in \ker g^{k+2}=\im g^{k+1}$, as it follows from the exactness of the second row. Therefore, there exists a $\widetilde{b} \in B^{k}$ such that $b = g^{k+1}(\widetilde{b})$. We obtain
    \[f^{k+1}(a)=\pi^{k+2}(g^{k+1}(\widetilde{b}))=f^{k+1}(\pi^{k+1}(\widetilde{b})) \implies f^{k+1}(a-\pi^{k+1}(\widetilde{b}))=0,\]
    therefore $a-\pi^{k+1}(\widetilde{b}) \in \ker f^{k+1}=\im f^k$, which implies that there exists a $\widetilde{a}$ such that $f^k(\widetilde{a})=f^{k+1}(a)-\pi^{k+1}(\widetilde{b})$. Finally,
    \[\overline{f^k}[\widetilde{a}]=[f^k(\widetilde{a})]=[f^{k+1}(a)-\pi^{k+1}(\widetilde{b})]=[f^k(a)],\]
    that shows $\ker \overline{f^{k+1}}\subset \im \overline{f^k}$. Since $k$ was arbitrary, we conclude that the induced sequence is exact: $\ker \overline{f^{k+1}}=\im \overline{f^k}$ for every $k \in \mathbb{N}$.  
\end{proof}

Now, consider a complete singular Riemannian foliation $(M,\fol)$ and its minimal locus $\Sg$. Let $U$ be a suitable neighborhood of the minimal stratum $\Sg$, as discussed in Section \ref{alexandrino blow}, and define
\[
h\colon [0,1]\times U \longrightarrow U,\qquad 
h(t,x)=\exp^{\perp}\!\big((1-t)\,(\exp^{\perp})^{-1}(x)\big).
\]
This map is a deformation retraction: indeed, $h(0,x)=x$, while $h(1,x)=\exp^{\perp}(0)=\pi_\Sg(x)\in \Sg,$ where $\pi_\Sg\colon U\to \Sg$ denotes the orthogonal projection, and $h(1,q)=q$ for every $q\in\Sg$. Moreover, $h$ is a foliated map, by Lemma \ref{homothetic lemma}. By the homotopy invariance of basic cohomology shown in \cite[Corollary 2.5]{equicaramello}, it follows that
\[
\Ho^i(\fol|_{U})\;\cong\; \Ho^i(\fol|_{\Sg}).
\]

Likewise, let $\pi\colon \blup{\Sg}{M}\to M$ be the blow-down map of $M$ along $\Sg$. Set $\blup{\Sg}{U}=\pi^{-1}(U)$, and define
\[
\widehat{h}\colon [0,1]\times \blup{\Sg}{U}\longrightarrow \blup{\Sg}{U},\qquad 
\widehat{h}(t,(x,[\xi]))=(h(t,x),[\xi]),
\]
which is a foliated deformation retraction of $\blup{\Sg}{U}$ onto the exceptional divisor $E=\pi^{-1}(\Sg)$, as it follows directly from the defining properties of $h$ and from the fact that $\pi$ restricts outside $E$ to a foliated diffeomorphism onto its image. Consequently,
\[
\Ho^i(\blup{\Sg}{\fol}|_{\blup{\Sg}{U}})
\;\cong\;
\Ho^i(\blup{\Sg}{\fol}|_{E}).
\]

\begin{thm}\label{principal result} Let $(M,\fol)$ be a complete singular Riemannian foliation with minimal locus $\Sg$, and let $\pi\colon \blup{\Sg}{M}\to M$ be the blow-down map. If each $\pi^*\colon \Ho^i(\fol) \to \Ho^i(\blup{\Sg}{\fol})$ is injective, then
\begin{equation}\label{equation isom cohom bas blow}
\Ho^i(\blup{\Sg}{\fol})
\cong 
\pi^*(\Ho^i(\fol))\oplus \dfrac{\Ho^i(\blup{\Sg}{\fol}|_E)}{\pi^*(\Ho^i(\fol|_{\Sg}))},
\end{equation}
for every $i$, as vector spaces.
\end{thm}

\begin{proof}
In order to clean up the notation, we will write $\pi^*$ even when this map is restricted/co-restricted to specific subsets; we will explicitly indicate the domain and co-domain of the map when necessary. 
    
As before, we fix a distinguished tubular neighborhood $U$ of $\Sg$ and consider $\blup{\Sigma}{U}=\pi^{-1}(U)$. From now on, we will denote 
$$U^*:=M\setminus U, \quad M^*:=M\setminus \Sg, \quad (\blup{\Sg}{M})^*=\blup{\Sg}{M}\setminus E, \quad (\blup{\Sg}{U})^*=\blup{\Sg}{U}\setminus E.$$ 
Observe that $\blup{\Sg}{M}=(\blup{\Sg}{U})^*\cup \blup{\Sg}{U}$ and $M=U \cup U^*$, hence we can use the Mayer Vietoris sequence for basic cohomology to obtain the commutative diagram

\begin{equation}\label{mv diagram}
\begin{tikzcd}
{\Ho^i(\fol)} 
  & {} 
  & {\Ho^i(\fol|_{M^*}) \oplus \Ho^i(\fol|_{\Sg})} 
  & {\Ho^{i}(\fol|_{U^*})} \\
{\Ho^i(\blup{\Sg}{\fol})} 
  && {\Ho^i(\blup{\Sg}{\fol}|_{(\blup{\Sg}{M})^*}) \oplus \Ho^i(\blup{\Sg}{\fol}|_E)} 
  & {\Ho^{i}(\blup{\Sg}{\fol}|_{(\blup{\Sg}{U})^*}),}
\arrow["i", shorten <= 4pt, shorten >= 4pt, from=1-1, to=1-3]
\arrow["\pi^*", from=1-1, to=2-1]
\arrow["j", from=1-3, to=1-4]
\arrow["\sigma", from=1-3, to=2-3]
\arrow["\cong", from=1-4, to=2-4]
\arrow["{\widetilde{i}}"', shorten <= 4pt, shorten >= 4pt, from=2-1, to=2-3]
\arrow["{\widetilde{j}}"', from=2-3, to=2-4]
\end{tikzcd}
\end{equation}
where we used $\Ho^i(\fol|_U) \cong \Ho^i(\fol|_\Sg)$ and $\Ho^i\big (\blup{\Sg}{\fol}|_{\blup{\Sg}{U}}\big )\cong \Ho^i(\blup{\Sg}{\fol}|_E)$ and the fact that $\pi^*\colon \Ho^i(\fol|_{U^*}) \rar \Ho^i(\blup{\Sg}{\fol}|_{(\blup{\Sg}{U})^*})$ is an isomorphism. The map $\sigma$ is given by
\[([\eta],[\omega]) \longmapsto  \big([\pi^*(\eta)]|_{\blup{\Sg}{M}^*}, [\pi^*(\omega)]|_{E} \big),\]
which is an isomorphism on the first factor. Since the horizontal rows of diagram \eqref{mv diagram} are exact, it follows directly from Lemma \ref{hom lemma} that this diagram induces a collapsed cochain complex
\[\begin{tikzcd}[sep=large]
	\cdots \longrightarrow 0 & {\dfrac{\Ho^i(\blup{\Sg}{\fol})}{\pi^*(\Ho^i(\fol))}} & {\dfrac{\Ho^i(\blup{\Sg}{\fol}|_E)}{\pi^*(\Ho^i(\fol|_\Sg))}} & 0 \longrightarrow \cdots,
	\arrow["{\bar{\delta}}", from=1-1, to=1-2]
	\arrow["{\bar{i}}", from=1-2, to=1-3]
	\arrow["{\bar{j}}", from=1-3, to=1-4]
\end{tikzcd}\]
because $\Ho^i(\blup{\Sg}{\fol}|_{(\blup{\Sg}{U})^*})/\pi^*(\Ho^i(\fol|_{U^*}))$ trivializes for every $i$, since $\pi^*$ is an isomorphism when restricted to such subsets.

We claim that $\pi^*\colon \Ho^i(\fol|_{\Sg}) \rar \Ho^i(\blup{\Sg}{\fol}|_E)$ is injective. In fact, for $i>0$, extending diagram \eqref{mv diagram} with:
\[\begin{tikzcd}
	{\Ho^{i-1}(\fol|_{U^*})} & {\Ho^i(\fol)} & \cdots \\
	{\Ho^{i-1}(\blup{\Sg}{\fol}|_{\widetilde{U}^*})} & {\Ho^i(\blup{\Sg}{\fol})} & \cdots
	\arrow[from=1-1, to=1-2]
	\arrow["\cong"', from=1-1, to=2-1]
	\arrow[from=1-2, to=1-3]
	\arrow["\pi^*"', from=1-2, to=2-2]
	\arrow[from=2-1, to=2-2]
	\arrow[from=2-2, to=2-3]
\end{tikzcd}\]
and applying the Four Lemma \cite[Lemma 3.2]{maclane} we obtain that $\pi^*\colon \Ho^i(\fol|_{\Sg}) \rar \Ho^i(\blup{\Sg}{\fol}|_E)$ is injective, since $\sigma$ is. In the case $i=0$, just observe that $\Sg/\fol$ and $E/\blup{\Sg}{\fol}$ have the same number of connected components (it is the number of minimal strata).

Therefore all the vertical arrows of \eqref{mv diagram} are injective, and Lemma \ref{hom lemma} guarantees that the collapsed cochain is exact. In particular,
\[\dfrac{\Ho^i(\blup{\Sg}{\fol})}{\pi^*(\Ho^i(\fol))} \cong \dfrac{\Ho^i(\blup{\Sg}{\fol}|_E)}{\pi^*(\Ho^i(\fol|_\Sg))}\]
as vector spaces, which gives our desired result.
\end{proof}

It is worth noticing that the same proof holds, more generally, for the blow-up along a closed, saturated submanifold $S$ within a stratum, since $S$ admits a distinguished tubular neighborhood with the same properties. This further generalization can be used, for example, when blowing-up an already regular Riemannian foliation along a closed saturated submanifold --- for instance, a stratum of $\overline{\fol}$ or a holonomy stratum. In this vein, Theorem \ref{principal result} generalizes the classical description of the cohomology of the blow-up (see \cite{gitler,poag}), which corresponds to the case in which $\fol$ is the trivial foliation by points.

\begin{cor}\label{de rham cohomology} For a compact orientable manifold $M$,  let $\Sg\subset M$ be a closed submanifold such that $\blup{\Sg}{M}$ is orientable. Then
\[\Ho^i_{\mathrm{dR}}(\blup{\Sg}{M}) \cong \Ho^i_{\mathrm{dR}}(M) \oplus \frac{\Ho^i_{\mathrm{dR}}(E)}{\pi^*(\Ho^i_{\mathrm{dR}}(\Sg))}.\]  
\end{cor}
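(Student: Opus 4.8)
\begin{proofoutline}
The plan is to obtain Corollary~\ref{de rham cohomology} as the special case of Theorem~\ref{principal result} in which $\fol$ is the trivial foliation of $M$ by points. For this $\fol$ one has $\Omega^i(\fol)=\Omega^i(M)$, hence $\Ho^i(\fol)=\Ho^i_{\mathrm{dR}}(M)$; moreover every subset of $M$ is $\fol$-saturated, so an arbitrary closed submanifold $V\subset M$ qualifies as a closed saturated submanifold contained in the (here non-singular) minimal locus $\Sigma_{\min}=M$, and Remark~\ref{saturated submanifold} applies. Since blowing up a foliation by points yields a foliation by points, we have $\Ho^i(\blup{V}{\fol})=\Ho^i_{\mathrm{dR}}(\blup{V}{M})$, $\Ho^i(\fol|_V)=\Ho^i_{\mathrm{dR}}(V)$ and $\Ho^i(\blup{V}{\fol}|_E)=\Ho^i_{\mathrm{dR}}(E)$, so Theorem~\ref{principal result} yields precisely the asserted decomposition, \emph{provided} its hypothesis holds, i.e.\ that
\[
\pi^*\colon \Ho^i_{\mathrm{dR}}(V)\longrightarrow \Ho^i_{\mathrm{dR}}(E),\qquad \pi\colon E=\mathbb{P}(\nu V)\to V,
\]
is injective.

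Everything thus reduces to the injectivity of $\pi^*$ for the projectivized normal bundle, whose fiber is $\mathbb{RP}^{k-1}$ with $k=\codim V$. I would settle this along the lines already available. If $k$ is odd, the fibers are $\mathbb{R}$-acyclic in positive degrees, the Leray spectral sequence of $\pi$ collapses and $\pi^*$ is an isomorphism (then $\Ho^i_{\mathrm{dR}}(E)/\pi^*\Ho^i_{\mathrm{dR}}(V)=0$ and the statement collapses to $\Ho^i_{\mathrm{dR}}(\blup{V}{M})\cong\Ho^i_{\mathrm{dR}}(M)$, in accordance with \eqref{euler charac}). If $\nu V$ carries a nowhere-vanishing section (in particular if it is trivial), the induced section of $\pi$ splits $\pi^*$. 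And if $\pi$ is an oriented foliated bundle, injectivity follows from basic integration along the fibers \cite{dirkloc}, exactly as in item~(iv).

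The step I expect to be the genuine obstacle is precisely this verification that $\pi^*$ is injective. In the classical references it is automatic, but for reasons that do not survive the passage to real de Rham cohomology: over $\mathbb{Z}_2$ one has Poincar\'e duality on the always $\mathbb{Z}_2$-orientable $E$ \cite{gitler}, and in the complex-analytic case one has Leray--Hirsch with the hyperplane class of the \emph{complex} projective bundle \cite{poag}; here, by contrast, $E=\mathbb{P}(\nu V)$ need not be orientable and $\Ho^\bullet_{\mathrm{dR}}(E)$ need not be a free $\Ho^\bullet_{\mathrm{dR}}(V)$-module, so some orientability-type hypothesis on $\nu V$ is genuinely needed for a Gysin section of $\pi^*$ to exist. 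Accordingly I would prove Corollary~\ref{de rham cohomology} as the instance of Theorem~\ref{principal result} in which $\pi^*$ is injective, the cleanest sufficient hypotheses being $\codim V$ odd or $\nu V$ admitting a nowhere-zero section --- both of which hold automatically in the single-leaf situation of Corollary~\ref{isolated leaves}.
\end{proofoutline}
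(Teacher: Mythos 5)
Your reduction is exactly the paper's: take $\fol$ to be the trivial foliation by points, so that Theorem~\ref{principal result} (together with Remark~\ref{saturated submanifold}) delivers the decomposition as soon as $\pi^*\colon \Ho^i_{\mathrm{dR}}(V)\to \Ho^i_{\mathrm{dR}}(E)$ is injective. Where you diverge is at that final step. The paper's proof simply asserts that, since $M$ is orientable, one can build a left inverse of $\pi^*$ ``by means of Poincar\'e duality'' as in \cite[Section~2]{gitler}, and stops there; you argue that Gitler's argument is a $\mathbb{Z}_2$-phenomenon and that over $\mathbb{R}$ an extra hypothesis is genuinely needed. Your skepticism is vindicated: the corollary as stated fails. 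Take $M=\mathbb{CP}^2$ and $V=\mathbb{CP}^1$ a projective line, so $\nu V\cong\mathcal{O}(1)$ and $E=\mathbb{P}(\nu V)=S(\mathcal{O}(1))/\mathbb{Z}_2\cong S(\mathcal{O}(2))\cong\mathbb{RP}^3$. Then $\pi^*\colon \Ho^2_{\mathrm{dR}}(\mathbb{CP}^1)=\mathbb{R}\to \Ho^2_{\mathrm{dR}}(\mathbb{RP}^3)=0$ is not injective (its kernel is generated by the Euler class of $\nu V$, precisely the Gysin-sequence obstruction you identify), and a Mayer--Vietoris computation with $\blup{V}{M}=D^4\cup_{S^3}N$, where $N$ is the twisted interval bundle over $\mathbb{RP}^3$, gives $\Ho^2_{\mathrm{dR}}(\blup{V}{M})=0$, whereas the right-hand side of the claimed isomorphism is $\mathbb{R}\oplus 0$. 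So the gap is in the paper's proof, not in yours: for $\codim V$ even the obstruction is the real Euler class of $\nu V$, and orientability of $M$ does not remove it.

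Two small refinements to your diagnosis. First, the failure is not only that $E$ may be non-orientable: in the example above $E=\mathbb{RP}^3$ is orientable, yet no left inverse exists because there is no class in $\Ho^{k-1}_{\mathrm{dR}}(E)$ integrating to $1$ over the fibers (i.e.\ Leray--Hirsch fails), so the correct hypothesis is the existence of a fiberwise orientation class, exactly as in Corollary~\ref{oriented basic case}. Second, your list of sufficient conditions is correct and is the honest form of the statement: $\codim V$ odd (rational acyclicity of $\mathbb{RP}^{\mathrm{even}}$ makes $\pi^*$ an isomorphism, consistent with \eqref{euler charac}), a nowhere-zero section of $\nu V$ (giving a section of $\pi$), or an oriented foliated bundle structure on $\pi|_E$ (giving integration over the fiber). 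The corollary should be restated with one of these hypotheses added.
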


\begin{proof}
Consider the trivial foliation $\fol$ by points of $M$, hence $\Ho(\fol)=\Ho_{\mathrm{dR}}(M)$. Notice that $\blup{\Sg}{\fol}$ is also trivial. Furthermore, when $M$ and $\blup{\Sg}{M}$ are orientable, one can construct a left inverse for $\pi^* \colon \Ho^{i}_{\mathrm{dR}}(\blup{\Sg}{M}) \to \Ho^{i}_{\mathrm{dR}}(M)$ (see \cite{tueq}). In particular, $\pi^*$ is injective, so we can apply Theorem \ref{principal result}.
\end{proof}



There is a special situation in which the basic cohomology of the original foliation and the basic cohomology of its blow-up are naturally isomorphic. This phenomenon highlights the inherently transverse nature of basic cohomology, in contrast with the classical case.

\begin{prop}\label{pullback foliation} If $\pi\colon E \rar B$ is a fiber bundle with connected fiber $F$ and $B$ has a regular foliation $\fol$, then $\pi$ induces an isomorphism between the holonomy pseudogroups of $\pi^*(\fol)$ and $\fol$. Consequently, $\pi^*\colon \Ho(\fol) \rar \Ho(\pi^*(\fol))$ is an algebra isomorphism. 
\end{prop}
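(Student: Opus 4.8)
The plan is to reduce the statement to a comparison of holonomy pseudogroups. I will construct a total transversal $\widetilde S$ for $\pi^*(\fol)$ on which $\pi$ restricts to a diffeomorphism $\widetilde\pi\colon\widetilde S\to S$ onto a total transversal $S$ of $\fol$, in such a way that $\widetilde\pi$ conjugates $\mathcal H(\pi^*(\fol))$ with $\mathcal H(\fol)$. The final assertion about basic cohomology then follows from the fact that the basic complex of a foliation restricts isomorphically to the complex of holonomy-invariant forms on a complete transversal, so that $\Ho^i(\fol)$ depends only on the equivalence class of $\mathcal H(\fol)$, the comparison isomorphism being the one induced by $\widetilde\pi$ (a restriction of $\pi$).

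First I would record the structure of $\pi^*(\fol)$: since $\pi$ is a submersion, the leaves of $\pi^*(\fol)$ are the maximal connected integral manifolds of $(\dif\pi)^{-1}(T\fol)$, and a dimension count shows that $\pi^{-1}(L)$ is such a manifold for each leaf $L$ of $\fol$; as $F$ is connected, $\pi^{-1}(L)$ is connected, so the leaves of $\pi^*(\fol)$ are exactly the sets $\pi^{-1}(L)$, each of which contains the corresponding fibers of $\pi$. Now take a total transversal $S$ for $\fol$ realized as a disjoint union of small embedded transverse disks $S=\bigsqcup_\alpha C_\alpha$. Over each contractible $C_\alpha$ the bundle $\pi$ is trivial, hence admits a section $s_\alpha\colon C_\alpha\to E$; set $\widetilde S:=\bigsqcup_\alpha s_\alpha(C_\alpha)$. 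A direct tangent-space computation, using $\pi\circ s_\alpha=\id$ together with $T_bC_\alpha\cap T_b\fol=0$, shows that $\widetilde S$ is everywhere transverse to $\pi^*(\fol)$ and has the correct dimension $\codim\pi^*(\fol)=\codim\fol$; it meets every leaf $\pi^{-1}(L)$ because $S$ meets $L$ and $F$ is connected; and $\widetilde\pi:=\pi|_{\widetilde S}$ is, by construction, the inverse of $\bigsqcup_\alpha s_\alpha$, hence a diffeomorphism onto $S$.

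The substantive step is to verify that $\widetilde\pi$ conjugates the holonomy pseudogroups. For this I would cover $E$ by charts $\widetilde O=O\times F$ that are trivializing for $\pi$ and with $O$ distinguished for $\fol$; in such a chart $\pi^*(\fol)$ has plaques of the form (plaque of $\fol|_O)\times F$, so its transverse coordinate is literally the transverse coordinate of $\fol|_O$, and consequently the elementary holonomy transformations of $\pi^*(\fol)$ between two such charts coincide with those of $\fol$ between the underlying distinguished charts of $B$. Since these elementary transformations generate the respective pseudogroups and $\widetilde\pi$ identifies the relevant transversal pieces, one concludes that $\widetilde\pi$ induces an isomorphism $\mathcal H(\pi^*(\fol))\cong\mathcal H(\fol)$.

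Finally, restriction to the complete transversals gives isomorphisms of differential complexes $\Omega(\fol)\cong\Omega(S)^{\mathcal H(\fol)}$ and $\Omega(\pi^*(\fol))\cong\Omega(\widetilde S)^{\mathcal H(\pi^*(\fol))}$, and these intertwine $\pi^*$ with the pullback $\widetilde\pi^{*}$ along the conjugating diffeomorphism; as $\widetilde\pi^{*}$ is an isomorphism between the spaces of invariant forms, it follows that $\pi^*\colon\Ho^i(\fol)\to\Ho^i(\pi^*(\fol))$ is an isomorphism, which is the claim. The main difficulty I anticipate is organizational rather than conceptual: one must arrange the two systems of distinguished charts compatibly enough that the generators of $\mathcal H(\pi^*(\fol))$ are seen to correspond bijectively, under $\widetilde\pi$, to those of $\mathcal H(\fol)$, and one needs a clean reference for the identification of basic cohomology with holonomy-invariant forms on a complete transversal.
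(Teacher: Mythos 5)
Your proposal is correct and follows essentially the same route as the paper: constructing $\widetilde S$ from local sections over trivializing neighborhoods of the components of $S$ so that $\widetilde\pi=\pi|_{\widetilde S}$ is a diffeomorphism conjugating the two holonomy pseudogroups, and then transporting the conclusion to basic cohomology via the identification of basic forms with holonomy-invariant forms on a complete transversal. Your write-up is in fact somewhat more detailed than the paper's (e.g.\ the explicit identification of the leaves of $\pi^*(\fol)$ as $\pi^{-1}(L)$ using connectedness of $F$, and the chart-level verification that the elementary holonomies agree), but no new idea is involved.
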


\begin{proof}
The result follows since in this situation one can choose a total transversals $S$ and $\widetilde{S}$ for $\fol$ and $\pi^*(\fol)$, respectively, such that the restriction
   \[\widetilde{\pi}=\pi|_{\widetilde{S}}\colon \widetilde{S} \rar S\]
is a diffeomorphism (for instance, $\widetilde{S}$ can be constructed from a transversal $S$ whose components are contained in $E$-trivializing open sets, as intersections of $\pi^{-1}(S)$ with local sections on the trivializations). The equivalence $\mathcal{H}(\pi^*(\fol))\cong\mathcal{H}(\fol)$ is then induced by $\widetilde{\pi}$: it is the family of all restrictions $\pi|_W$, where $W$ is an open subset of $\widetilde{S}$. Indeed, $\mathcal{H}(\fol)$ is generated by elements of the form $\widetilde{\pi}|_{W'} \circ \widetilde{h} \circ (\widetilde{\pi}|_W)^{-1}$, for sufficiently small $W$ and $W'$ (see \cite[Chapter 1]{dynamics} for more details).

The space of holonomy-invariant $k$-forms on $S$, denoted $\Omega^k(S)^{\mathcal{H}}$, is naturally identified with the space of basic $k$-forms $\Omega^k(\fol)$ (see, e.g. \cite{haefliger}). Hence
\[\Omega^k(\pi^*(\fol))\cong\Omega^k(\tilde{S})^{\tilde{\mathcal{H}}}\cong\Omega^k(S)^{\mathcal{H}}\cong\Omega^k(\fol).\]
where the middle isomorphism is induced by $\pi$, as we saw, so $\pi^*$ is an isomorphism already on the level of basic forms.
\end{proof} 

\begin{thm}\label{cor pullback foliation}  Let $(M,\fol)$ be a complete singular Riemannian foliation with minimal locus $\Sg$, and let $\pi\colon \blup{\Sg}{M}\to M$ be the blow-down map. If $\blup{\Sg}{\fol}|_E=\pi^*(\fol|_\Sigma)$ then,
\[\Ho^i(\blup{\Sg}{\fol}) \cong \Ho^i(\fol),\]
for every $i$, as vector spaces.
\end{thm}

\begin{proof}
We take a distinguished tubular neighborhood $U$ of $\Sg$ and $\blup{\Sigma}{U}=\pi^{-1}(U)$. As in Theorem \ref{principal result}, let us consider the Mayer-Vietoris diagram \eqref{mv diagram}. By Proposition~\ref{pullback foliation}, $\pi^*\colon \Ho^i(\fol|_\Sg) \rar \Ho^i(\blup{\Sg}{\fol}|_E)$ is an isomorphism, which implies that $\sigma$ is an isomorphism. Now, by the Five Lemma \cite[Lemma 3.3]{maclane}, we conclude that $\pi^* \colon \Ho^i(\fol) \rar \Ho^i(\blup{\Sg}{\fol})$ is an isomorphism, for every $i$.
\end{proof}

Going back to our geometric interpretation of $\blup{\Sigma}{\fol}|_E$ from Remark \ref{obs foliaton along exceptional divisor}, let us better understand the condition $\blup{\Sg}{\fol}|_E=\pi^*(\fol|_\Sigma)$. First, more intuitively, if we consider $\fol$ restricted to a small distance cylinder $C_r(\Sg)$, a leaf of $\pi^*(\fol|_\Sg)$ corresponds to taking the entire tube of radius $r$ over a leaf $L \subset \Sg$. Meanwhile, a leaf of the actual foliation $\fol|_{C_r(\Sg)}$ wraps around $L$ but might only cover a strictly lower-dimensional portion of the spherical cross-sections. More formally, the geometry of $\blup{\Sg}{\fol}$ on the exceptional divisor is fiberwise determined by the infinitesimal (linearized) foliation $\fol_x$ on the normal spaces $\nu_x\Sg$. Note that $\fol_x$ captures only the transverse dynamics; the actual leaves of $\blup{\Sg}{\fol}|_E$ also spread along the tangent directions of $\fol|_\Sg$. Over a point $x \in \Sg$, the leaves of the pullback foliation $\pi^*(\fol|_\Sg)$ contain the entire projective fiber $\mathbb{P}(\nu_x\Sg)$, thus $T(\blup{\Sg}{\fol}|_E) \subseteq T(\pi^*(\fol|_\Sg))$ always holds. On the other hand, the leaves of $\blup{\Sg}{\fol}|_E$ intersect this fiber exactly along the projectivization of the leaves of $\fol_x$. Therefore, one has the equality $\blup{\Sg}{\fol}|_E = \pi^*(\fol|_\Sg)$ if and only if the projective fibers are entirely absorbed by the leaves of $\blup{\Sigma}{\fol}$, which occurs precisely when the unit normal sphere on $\nu_x\Sg$ is a leaf of $\fol_x$. In particular, when $\operatorname{codim}(\Sg) = 2$ this condition is automatically satisfied: since $\Sg$ is a minimal stratum, the origin is the only zero-dimensional leaf of $\fol_x$, which hence has to be the foliation by concentric circles (see figure \ref{blup circle} again).

\section*{Acknowledgements}We are grateful to Prof. Dirk Töben and Prof. Marcos Alexandrino for the insightful suggestions and helpful discussions. LRS was financed in part by the Coordenação de Aperfeiçoamento de Pessoal de Nível Superior - Brazil (CAPES) – Finance Code 001, and by Fundação de Amparo à Pesquisa do Estado de São Paulo (FAPESP), grant 2025/27715-7.

\end{document}